\pgfplotsset{compat=1.16}
\DeclareMathOperator{\lehmer}{lc}
\DeclareMathOperator{\numbering}{n}
\DeclareMathOperator{\lex}{lex}
\theoremstyle{definition}
\newtheorem{thm}{Theorem}[section]
\newtheorem{cor}[thm]{Corollary}
\newtheorem{lem}[thm]{Lemma}
\newtheorem{defin}[thm]{Definition}
\newtheorem{rem}[thm]{Remark}
\numberwithin{equation}{section}
\renewenvironment{proof}[1]{\noindent  \ignorespaces \textit{Proof#1.}}{\hfill\qedsymbol \vspace{0.5cm}}
\begin{document}

\baselineskip=17pt

\title{The Lehmer factorial norm on $S_{n}$}

\author{Pawe\l{} Zawi\'slak\\
Department of Mathematics and Mathematical Economics \\
SGH Warsaw School of Economics \\
Al. Niepodleg\l{}o\'sci 162, 02-554 Warszawa, Poland \\
E-mail: pzawis@sgh.waw.pl \\
}

\date{\today}

\maketitle

\renewcommand{\thefootnote}{}
\footnote{2020 \emph{Mathematics Subject Classification}: 05A05, 62H20, 54E35, 20B99}
\footnote{\emph{Key words and phrases}: Lehmer code, Lehmer factorial norm, permutation}

\renewcommand{\thefootnote}{\arabic{footnote}}
\setcounter{footnote}{0}

\begin{abstract}
We introduce a new family of norms on the permutation groups $S_{n}$. We examine their properties.
\end{abstract}

\section{Introduction}\label{sec:Introduction}
Metrics on the permutation groups $S_{n}$ were considered in many different contexts. 

On one side, permutations can be used used as rankings, therefore some metrics on permutations originate from attempts of comparing rankings. Many well known measures of similarity between rankings lead to definition of metric on $S_{n}$. The most popular measures of similarity are Kendall's $\tau$ (\cite{K}) and Spearman's $\rho$ (\cite{S}). These two measures leads to Kendall's distance and Spearman's distance. Together with Spearman footrule (also known as Manhattan distance) and Hamming distance, these four metrics are the most popular metrics on $S_{n}$ used in the statistics (\cite{DG} and \cite{DH}).

The generalisations of these metrics in many different contexts we considered for example in \cite{KW}, \cite{LH}, \cite{LZH}, \cite{KV}, \cite{QDRL}, \cite{LY1}, \cite{LY2}, \cite{WSSC}, \cite{FSS} and \cite{PP}. 

On the other side, the study of statistical properties of natural valued functions on $S_{n}$ has almost two hundred years of history, started with \cite{R} and \cite{M}, and continued by many authors (see for example \cite{FZ}, \cite{SS} and \cite{CSZ}).

On the third side, natural inclusions of $S_{k}$ in $S_{l}$ (for $k<l$) lead to a limit object $S_{\infty}$. Metrics on some limit objects related to groups were explored for example in \cite{C}, \cite{TW}, \cite{W} and \cite{GMZ}.  

In this paper we present a slightly different approach, which can be considered as transversal to the previous three. The demand for the metric satisfying conditions (\ref{thm:LehmerNormMinimal})-(\ref{thm:LehmerNormTransposition}) of Theorem \ref{thm:LehmerNorm} comes from the analysis of the votings' networks. Presicely, all most popular metrics on $S_{n}$ do not differentiate between the change on first two positions of the ranking and the change on last two positions. The metric coming from the norm presented in this paper do -- see Theorem \ref{thm:LehmerNorm} (\ref{thm:LehmerNormOrder}). Additionally, Theorems \ref{thm:Distribution} and \ref{thm:Recursion} describe  distributions of the new norm (we call it the Lehmer norm) on all permutation groups $S_{n}$ as well as its distribution on $S_{\infty}$.

We do not consider many others research contexts of permutations metrics. For more of these contexts see for example \cite{DH}.

This paper is organised as follows.  Section \ref{sec:Notation} contains the basic definitions and notation. In Section \ref{sec:LehmerNorm} we define the main object of this article - the Lehmer factorial norm. The definition bases on the notion on the Lehmer code. The properties of the Lehmer code are described in Lemmas \ref{lem:LehmerCodeProperties} and \ref{lem:LehmerCodeTransposition} as well as in Corollary \ref{cor:LehmerCodeProperties}. Theorem \ref{thm:LehmerNorm} contains the basic attributes of the Lehmer norm. In Section \ref{sec:Distribution} we focus on the distribution of the Lehmer norm. This distribution is fully described in Theorems \ref{thm:Distribution} and \ref{thm:Recursion} together with Lemma \ref{lem:DecompositionNumbers}.

\section{Basic definitions and notation}\label{sec:Notation}
In this section we recall some basic definitions used in this paper as well as we set some notation.

In this article $\mathbb{N}$ denotes the set of all natural numbers, starting at $0$, whereas $\mathbb{N}_{+}$ -- the set of all positive natural numbers. For $n \in \mathbb{N}_{+}$ by $[n]$ we denote the set $\{1,2,\ldots,n\}$ and by $S_{n}$ -- the group of all permutations of $[n]$. $S_{\infty}$ stands for the group of all permutations of $\mathbb{N}_{+}$ with a finite support.

A permutation $\sigma \in S_{n}$ is denoted by 
\begin{displaymath}
\sigma=(\sigma(1),\sigma(2),\ldots,\sigma(n))
\end{displaymath} 
In particular $e_{n}=(1,2,\ldots,n)$ denotes the identity permutation. 

By $\sigma^{-1}$ we denote the inverse permutation to $\sigma$, by $\sigma\tau$ -- the composition of $\sigma$ and $\tau$, defined by $(\sigma\tau)(i)=\sigma(\tau(i))$ for $i=1,2,\ldots,n$, whereas $\bar{\sigma}$ stands for the permutation reverse to $\sigma$, given by $\bar{\sigma}(i)=\sigma(n+1-i)$ for $i=1,2,\ldots,n$.

For $s=1,2,\ldots,n-1$ let 
\begin{displaymath}
\sigma_{s}=(1,2,\ldots,s-1,s+1,s,s+2,\ldots,n)
\end{displaymath}
(so $\sigma_{s}$ is the adjacent transposition -- $(s,s+1)$ in the cycle notation).

\begin{defin}\label{def:LehmerCode}
For a permutation $\sigma \in S_{n}$ its \emph{Lehmer code} $\lehmer(\sigma)$ (see \cite{G}) is defined by
\begin{displaymath}
\lehmer(\sigma)=[c_{1}(\sigma),c_{2}(\sigma),\ldots,c_{n}(\sigma)]
\end{displaymath}
where the numbers $c_{i}(\sigma)$ (for $i=1,2,\ldots,n$) are given by
\begin{displaymath}
c_{i}(\sigma)=\left|\{j \in [n]: j>i \textrm{ and } \sigma(j)<\sigma(i)\}\right|
\end{displaymath}
\end{defin}

The Lehmer code of $\sigma$ coincides with the factorial number system representation of its position in the list of permutations of $[n]$ in the lexicographical order (numbering the positions starting from $0$) -- compare \cite{G} to \cite{L1} and \cite{L2}.

The Lehmer codes of permutations $\sigma \in S_{3}$ are presented in Table \ref{table:LehmerNorm}.

\section{The Lehmer factorial norm}\label{sec:LehmerNorm}

In this section we define the Lehmer factorial norm on the group $S_{n}$. We also examine its basic features.

We start with establishing some basic properties of the Lehmer code. To do this, we need some technical notation. For a permutation $\sigma \in S_{n}$ and for $i=1,2,\ldots,n$ let 
\begin{displaymath}
C(\sigma)_{i}=\{j \in [n]: j>i \textrm{ and } \sigma(j)<\sigma(i)\} \textrm{ and } 
A(\sigma)_{i}=[i]\cup C(\sigma)_{i} 
\end{displaymath}
Note, that if we denote the cardinality of $X$ by $|X|$, then $|C(\sigma)_{i}|=c_{i}(\sigma)$ and $|A(\sigma)_{i}|=i+c_{i}(\sigma)$.

\begin{lem}\label{lem:LehmerCodeProperties} 
For all permutations $\sigma, \tau \in S_{n}$ and for all $i=1,2,\ldots,n$ the following hold:
\begin{enumerate}[(i)]
\item\label{lem:LehmerCodePropertiesValue} $\sigma(i) \leq i + c_{i}(\sigma)$,
\item\label{lem:LehmerCodePropertiesComposition} $c_{i}(\sigma\tau)\leq c_{i}(\tau)+c_{\tau(i)}(\sigma)$,
\item\label{lem:LehmerCodePropertiesInverse} $\sigma^{-1}$ determines the bijection between $A(\sigma^{-1})_{\sigma(i)}$ and $A(\sigma)_{i}$. In particular 
\begin{displaymath}
i+c_{i}(\sigma)=\sigma(i)+c_{\sigma(i)}(\sigma^{-1})
\end{displaymath}
\end{enumerate}
\end{lem}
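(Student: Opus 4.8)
The plan is to prove all three parts by direct counting, reading off the definitions of $C(\sigma)_{i}$ and $A(\sigma)_{i}$. For part (\ref{lem:LehmerCodePropertiesValue}) I would count the values strictly below $\sigma(i)$. There are exactly $\sigma(i)-1$ of them, namely $1,2,\ldots,\sigma(i)-1$, and each occupies a distinct position. Any such value sits either at a position $j\le i$ --- and since $\sigma(i)$ is itself not below $\sigma(i)$, at most $i-1$ positions among $[i]$ can carry a smaller value --- or at a position $j>i$, in which case $j\in C(\sigma)_{i}$, so there are exactly $c_{i}(\sigma)$ of those. Summing the two counts gives $\sigma(i)-1\le(i-1)+c_{i}(\sigma)$, which rearranges to the claimed inequality.

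For part (\ref{lem:LehmerCodePropertiesComposition}), write $k=\tau(i)$, so that $(\sigma\tau)(i)=\sigma(k)$, and I would partition the index set $\{j>i:\sigma(\tau(j))<\sigma(k)\}$ counted by $c_{i}(\sigma\tau)$ according to the sign of $\tau(j)-k$. If $\tau(j)<k=\tau(i)$, then $j$ already lies in $C(\tau)_{i}$, so there are at most $c_{i}(\tau)$ such indices. If $\tau(j)>k$, then setting $m=\tau(j)$ the requirement $\sigma(m)<\sigma(k)$ places $m$ in $C(\sigma)_{k}$; since $\tau$ is a bijection, distinct admissible $j$ give distinct $m$, so there are at most $|C(\sigma)_{k}|=c_{\tau(i)}(\sigma)$ of them. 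Adding the two bounds yields $c_{i}(\sigma\tau)\le c_{i}(\tau)+c_{\tau(i)}(\sigma)$. The inequality (rather than equality) is exactly because the indices $j$ with $\tau(j)<k$ need not satisfy $\sigma(\tau(j))<\sigma(k)$, and the indices $\tau^{-1}(m)$ arising in the second case need not all exceed $i$.

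Part (\ref{lem:LehmerCodePropertiesInverse}) is the part I expect to require the most care, and the plan is to exhibit the bijection explicitly as the restriction of $\sigma$ itself (equivalently, its inverse is the restriction of $\sigma^{-1}$). Since $\sigma$ is a bijection of $[n]$, it suffices to prove the set equality $\sigma\bigl(A(\sigma)_{i}\bigr)=A(\sigma^{-1})_{\sigma(i)}$; the ``in particular'' statement then follows immediately by comparing cardinalities, using $|A(\sigma)_{i}|=i+c_{i}(\sigma)$ and $|A(\sigma^{-1})_{\sigma(i)}|=\sigma(i)+c_{\sigma(i)}(\sigma^{-1})$. To verify the set equality I would test membership position by position: writing $k=\sigma(i)$ and $q=\sigma(p)$, a position $p$ lies in $A(\sigma)_{i}$ iff $p\le i$ or ($p>i$ and $q<k$), while $q$ lies in $A(\sigma^{-1})_{k}$ iff $q\le k$ or ($q>k$ and $\sigma^{-1}(q)<i$); since $\sigma^{-1}(q)=p$, these two conditions match up once one splits on the signs of $p-i$ and $q-k$, noting that $q=k\iff p=i$. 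The only genuine obstacle is the bookkeeping in this case analysis --- making sure the boundary case $p=i$ and the two mixed regions $p<i,\,q>k$ and $p>i,\,q<k$ are assigned correctly --- but each of the essentially four regions lands on the same side of both memberships, which establishes $\sigma\bigl(A(\sigma)_{i}\bigr)=A(\sigma^{-1})_{\sigma(i)}$ and hence the stated bijection together with the identity $i+c_{i}(\sigma)=\sigma(i)+c_{\sigma(i)}(\sigma^{-1})$.
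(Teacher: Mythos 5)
Your proposal is correct and follows essentially the same route as the paper: part (\ref{lem:LehmerCodePropertiesValue}) is the same injection-counting argument (you count the values below $\sigma(i)$ where the paper counts those above, a harmless dualization), part (\ref{lem:LehmerCodePropertiesComposition}) is the identical case split on the sign of $\tau(j)-\tau(i)$, and part (\ref{lem:LehmerCodePropertiesInverse}) exhibits the same bijection given by restricting $\sigma$ (the paper verifies one inclusion and gets the other by swapping $\sigma$ with $\sigma^{-1}$, while you check the four-region membership equivalence directly --- both case analyses are sound). No gaps.
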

\begin{proof}{}
(\ref{lem:LehmerCodePropertiesValue}) Note that $|\{j\in [n]:j>i\}|=n-i$, so 
\begin{displaymath}
|\{j\in[n]: j>i \textrm{ and }  \sigma(j)>\sigma(i) \}|=n-i-c_{i}(\sigma)
\end{displaymath}
On the other hand
\begin{displaymath}
|\{\sigma(j) : j \in [n] \textrm{ and } \sigma(j)>\sigma(i) \}|=n-\sigma(i)
\end{displaymath}
And since
\begin{displaymath}
\sigma\Big[\{j \in [n]: j>i \textrm{ and } \sigma(j)>\sigma(i) \}\Big] \subseteq 
\{\sigma(j): j \in [n] \textrm{ and } \sigma(j)>\sigma(i) \} 
\end{displaymath}
it follows that $n-i-c_{i}(\sigma)\leq n-\sigma(i)$.

(\ref{lem:LehmerCodePropertiesComposition}) Choose $k \in C(\sigma\tau)_{i}$. If $\tau(k)<\tau(i)$, then $k\in C(\tau)_{i}$. Otherwise 
$\tau(k)>\tau(i)$ and $\sigma(\tau(k))<\sigma(\tau(i))$, therefore $\tau(k)\in C(\sigma)_{\tau(i)}$.

(\ref{lem:LehmerCodePropertiesInverse})
Choose $k \in A(\sigma^{-1})_{\sigma(i)}$ and let $l=\sigma^{-1}(k)$. We will show that $l \in A(\sigma)_{i}$. There are two possible cases.
\begin{enumerate}[(a)]
\item $k\leq \sigma(i)$: If $l \leq i$, then $l \in [i] \subseteq A(\sigma)_{i}$. Otherwise $l>i$, and since $\sigma(l) = k \leq \sigma (i)$, it follows that $\sigma(l)< \sigma(i)$ hence $l \in C(\sigma)_{i}\subseteq A(\sigma)_{i}$.
\item $k > \sigma(i)$: Therefore $k \in C(\sigma^{-1})_{\sigma(i)}$, so $l=\sigma^{-1}(k) < \sigma^{-1}(\sigma(i))=i$ and thus $l \in [i] \subseteq A(\sigma)_{i}$.
\end{enumerate}
We have shown that $\sigma^{-1}\left[A(\sigma^{-1})_{\sigma(i)}\right]\subseteq A(\sigma)_{i}$. Replacing $\sigma$ with $\sigma^{-1}$ leads to the second inclusion, which completes the proof.
\end{proof}

As an obvious conclusion of Lemma \ref{lem:LehmerCodeProperties} (\ref{lem:LehmerCodePropertiesInverse}) we get:
\begin{cor}\label{cor:LehmerCodeProperties}
Elements of the Lehmer code of the inverse permutation to $\sigma$ are given by
\begin{displaymath}
c_{i}(\sigma^{-1})=c_{\sigma^{-1}(i)}(\sigma)+\sigma^{-1}(i)-i
\end{displaymath}
for $i=1,2,\ldots,n$.
\end{cor}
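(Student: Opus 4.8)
The plan is to derive the corollary directly from the ``In particular'' clause of Lemma~\ref{lem:LehmerCodeProperties}~(\ref{lem:LehmerCodePropertiesInverse}), which already supplies the essential identity. That clause states that for every permutation and every index,
\begin{displaymath}
i+c_{i}(\sigma)=\sigma(i)+c_{\sigma(i)}(\sigma^{-1}).
\end{displaymath}
This is exactly the relation I want to solve for $c_{i}(\sigma^{-1})$, so the entire argument is a matter of substitution and relabeling rather than any new combinatorics.

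First I would rewrite the identity so that the term $c_{\,\cdot\,}(\sigma^{-1})$ stands alone on one side:
\begin{displaymath}
c_{\sigma(i)}(\sigma^{-1})=c_{i}(\sigma)+i-\sigma(i).
\end{displaymath}
This holds for every $i=1,2,\ldots,n$. The only obstacle — and it is a purely cosmetic one — is that the index on the left is $\sigma(i)$, whereas the statement of the corollary is indexed by a free variable running over $[n]$. The clean way to fix this is to substitute $i\mapsto\sigma^{-1}(i)$: since $\sigma$ is a bijection of $[n]$, as $i$ ranges over $[n]$ so does $\sigma^{-1}(i)$, so this substitution is legitimate and loses no cases. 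Under it the left-hand index $\sigma(i)$ becomes $\sigma(\sigma^{-1}(i))=i$, and the right-hand side becomes $c_{\sigma^{-1}(i)}(\sigma)+\sigma^{-1}(i)-i$.

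Carrying out that substitution yields precisely
\begin{displaymath}
c_{i}(\sigma^{-1})=c_{\sigma^{-1}(i)}(\sigma)+\sigma^{-1}(i)-i
\end{displaymath}
for all $i=1,2,\ldots,n$, which is the claimed formula. I do not expect any genuine difficulty here: the content is entirely contained in part~(\ref{lem:LehmerCodePropertiesInverse}) of the lemma, and the corollary merely repackages that symmetric identity into an explicit expression for the Lehmer code of the inverse. The one point worth stating carefully is the justification that the reindexing $i\mapsto\sigma^{-1}(i)$ is a permutation of the index set $[n]$, so that the reindexed identity indeed holds for every value of the new free variable.
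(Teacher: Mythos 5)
Your proposal is correct and matches the paper exactly: the paper presents the corollary as an ``obvious conclusion'' of Lemma~\ref{lem:LehmerCodeProperties}~(\ref{lem:LehmerCodePropertiesInverse}), and your rearrangement of $i+c_{i}(\sigma)=\sigma(i)+c_{\sigma(i)}(\sigma^{-1})$ followed by the reindexing $i\mapsto\sigma^{-1}(i)$ is precisely the intended argument. Your explicit remark that the substitution is legitimate because $\sigma$ is a bijection of $[n]$ is the only detail the paper leaves implicit.
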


In the next lemma we describe how the Lehmer code changes when a permutation is multiplied by an adjacent transposition.
\begin{lem}\label{lem:LehmerCodeTransposition}
\begin{enumerate}[(i)]
\item\label{lem:LehmerCodeTranspositionSingle} $c_{i}(\sigma_{s})=\delta_{is}$ (the Kronecker delta) for 
$i=1,2,\ldots,n$ and $s=1,2,\ldots,n-1$.
\item\label{lem:LehmerCodeTranspositionMultiplication} Let $\tau = \sigma\sigma_{s}$. If $\sigma(s)<\sigma(s+1)$, then
\begin{displaymath}
\begin{cases}
c_{i}(\tau)=c_{i}(\sigma) \textrm{ for } i\neq s,s+1 \\
c_{s}(\tau)=c_{s+1}(\sigma)+1 \\
c_{s+1}(\tau)=c_{s}(\sigma)
\end{cases}
\end{displaymath}
otherwise
\begin{displaymath}
\begin{cases}
c_{i}(\tau)=c_{i}(\sigma) \textrm{ for } i\neq s,s+1 \\
c_{s}(\tau)=c_{s+1}(\sigma) \\
c_{s+1}(\tau)=c_{s}(\sigma)-1
\end{cases}
\end{displaymath}
\end{enumerate}
\end{lem}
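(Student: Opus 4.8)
The plan is to compute the Lehmer code directly from the definition of $c_{i}$, using the elementary fact that $\tau=\sigma\sigma_{s}$ is obtained from $\sigma$ by interchanging the values in positions $s$ and $s+1$. For part (\ref{lem:LehmerCodeTranspositionSingle}) I would evaluate $c_{i}(\sigma_{s})$ directly, splitting into the cases $i<s$, $i=s$, $i=s+1$ and $i>s+1$. Because $\sigma_{s}$ fixes every point except $\sigma_{s}(s)=s+1$ and $\sigma_{s}(s+1)=s$, the only position producing an inversion is $i=s$, where the single index $j=s+1$ satisfies $\sigma_{s}(s+1)=s<s+1=\sigma_{s}(s)$; in every other case the entries to the right of position $i$ are all at least $\sigma_{s}(i)$, so the count vanishes. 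This gives $c_{i}(\sigma_{s})=\delta_{is}$.

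For part (\ref{lem:LehmerCodeTranspositionMultiplication}) I would first record, from $(\sigma\sigma_{s})(i)=\sigma(\sigma_{s}(i))$, that $\tau(s)=\sigma(s+1)$, $\tau(s+1)=\sigma(s)$ and $\tau(i)=\sigma(i)$ for $i\neq s,s+1$; in particular $\{\tau(s),\tau(s+1)\}=\{\sigma(s),\sigma(s+1)\}$ as a set. The indices $i\neq s,s+1$ are then handled at once: since $s$ and $s+1$ are adjacent, either both lie in $\{j:j>i\}$ (when $i<s$) or neither does (when $i>s+1$). In the former situation the unordered pair of values contributed by positions $s,s+1$ is unchanged and $\tau(i)=\sigma(i)$, so the number of $j>i$ with value below $\tau(i)$ equals $c_{i}(\sigma)$; in the latter situation positions $s,s+1$ play no role and all remaining entries coincide. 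Either way $c_{i}(\tau)=c_{i}(\sigma)$.

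The real computation is confined to $i=s$ and $i=s+1$, where the index $j=s+1$ must be isolated from the indices $j>s+1$. For $i=s$ I would split $c_{s}(\tau)$ as the contribution of $j=s+1$ plus $|\{j>s+1:\sigma(j)<\sigma(s+1)\}|=c_{s+1}(\sigma)$; the index $j=s+1$ contributes $1$ exactly when $\tau(s+1)=\sigma(s)<\sigma(s+1)=\tau(s)$, that is in the case $\sigma(s)<\sigma(s+1)$, and $0$ otherwise. Symmetrically, for $i=s+1$ I would observe $c_{s+1}(\tau)=|\{j>s+1:\sigma(j)<\sigma(s)\}|$ and compare it with $c_{s}(\sigma)=|\{j>s:\sigma(j)<\sigma(s)\}|$, the two differing exactly by whether the index $j=s+1$ is counted, which happens precisely when $\sigma(s+1)<\sigma(s)$. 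Reading off these four values in each of the two orderings of $\sigma(s)$ and $\sigma(s+1)$ reproduces the two displayed systems. I expect the only delicate point --- conceptually trivial but easy to get wrong --- to be keeping the strict inequalities and the bookkeeping of the index $j=s+1$ consistent between the two cases; the adjacency of $s$ and $s+1$ is exactly what prevents a third, ``sandwiched'' case and keeps the $i\neq s,s+1$ argument uniform.
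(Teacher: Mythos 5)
Your argument is correct and is essentially the paper's own proof: the paper likewise disposes of $i\neq s,s+1$ by noting that the unordered pair of entries at the adjacent positions is unchanged, and handles $i=s,s+1$ by the set identities $C(\tau)_{s}=C(\sigma)_{s+1}\cup\{s+1\}$, $C(\tau)_{s+1}=C(\sigma)_{s}$ (resp.\ $C(\tau)_{s}=C(\sigma)_{s+1}$, $C(\tau)_{s+1}=C(\sigma)_{s}\setminus\{s+1\}$), which is exactly your bookkeeping of whether the index $j=s+1$ is counted. No gaps.
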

\begin{proof}{}
(\ref{lem:LehmerCodeTranspositionSingle}) Follows definitions of the Lehmer code and $\sigma_{s}$.

(\ref{lem:LehmerCodeTranspositionMultiplication}) Note first, that 
\begin{displaymath}
\tau=(\sigma(1),\sigma(2),\ldots,\sigma(s-1),\sigma(s+1),\sigma(s),\sigma(s+2),\ldots,\sigma(n))
\end{displaymath}
Threrefore $c_{i}(\sigma)=c_{i}(\tau)$ for $i \neq s,s+1$ (for $i<s$ we have: $s \in C(\sigma)_{i}$ if and only if $s+1 \in C(\tau)_{i}$).

Suppose that $\sigma(s)<\sigma(s+1)$. In this case 
\begin{displaymath}
C(\tau)_{s}=C(\sigma)_{s+1}\cup\{s+1\} \textrm{ and }C(\tau)_{s+1}=C(\sigma)_{s}
\end{displaymath}
If $\sigma(s)>\sigma(s+1)$, then
\begin{displaymath}
C(\tau)_{s}=C(\sigma)_{s+1} \textrm{ and } C(\tau)_{s+1}=C(\sigma)_{s}\setminus \{s+1\}
\end{displaymath}
This finishes the proof.
\end{proof}

Now we are ready to define the Lehmer factorial norm on $S_{n}$.
\begin{defin}\label{def:LehmerNorm}
Let $\sigma \in S_{n}$ be a permutation with the Lehmer code 
\begin{displaymath}
\lehmer(\sigma)=[c_{1}(\sigma),c_{2}(\sigma),\ldots,c_{n}(\sigma)]=[k_{n-1}(\sigma),k_{n-2}(\sigma),\ldots,k_{0}(\sigma)]
\end{displaymath}
(here $k_{i}(\sigma)=c_{n-i}(\sigma)$ for $i=0,1,\ldots,n-1$). 
The \emph{Lehmer factorial norm (with base $2$)} $\mathcal{LF}_{2}:S_{n} \to \mathbb{N}$ is given by 
\begin{displaymath}
\mathcal{LF}_{2}(\sigma)=\sum_{i=0}^{n-1}\left[2^{i}-2^{i-k_{i}(\sigma)}\right]
\end{displaymath}
\end{defin}

\begin{rem}\label{rem:LehmerNorm}
For a number $m \in \mathbb{N}$ let
\begin{displaymath}
m=k_{n-1}\cdot (n-1)!+\ldots+k_{1}\cdot 1! + k_{0}\cdot 0!
\end{displaymath}
be the (unique!) decomposition of $m$ in such a way, that $0\leq k_{i} \leq i!$ for $i=0,1,\ldots,n-1$ (in particular $k_{0}=0$). Therefore $m$ has the following factorial number system representation
\begin{displaymath}
k_{n-1}:\ldots:k_{1}:0_{!}
\end{displaymath}  
Consider the function $LF_{2}:\mathbb{N}\to\mathbb{N}$ given by 
\begin{displaymath}
 LF_{2}(m)=LF_{2}\left(k_{n-1}\cdot (n-1)!+\ldots+k_{1}\cdot 1! + k_{0}\cdot 0!\right)=\sum_{i=0}^{n-1}\left[2^{i}-2^{i-k_{i}}\right]
\end{displaymath}
For $\sigma \in \mathbb{N}$ let $\numbering_{\lex}(\sigma)$ be the position of $\sigma$ in the lexicographical order (numbering starting from $0$). Then
\begin{displaymath}
\mathcal{LF}_{2}(\sigma)=LF_{2}(\numbering_{\lex}(\sigma))
\end{displaymath}
\end{rem}

The values $\mathcal{LF}_{2}(\sigma)$ for $\sigma \in S_{3}$ are presented in Table \ref{table:LehmerNorm}.

The next theorem yields information about the basic properties of the Lehmer norm.

\begin{thm}\label{thm:LehmerNorm} 
The norm $\mathcal{LF}_{2}$ satisfies the following:
\begin{enumerate}[(i)]
\item\label{thm:LehmerNormMinimal} $\mathcal{LF}_{2}(e_{n})=0$ is minimal and $e_{n}$ (the identity) is the only permutation with this property.
\item\label{thm:LehmerNormMaximal} $\mathcal{LF}_{2}(\bar{e_{n}})=2^{n}-(n+1)$ is maximal and $\bar{e_{n}}$ (the reverse of the identity) is the only permutation with this property.
\item\label{thm:LehmerNormOrder} $\mathcal{LF}_{2}(\sigma_{s})=2^{n-1-s}$ (recall that $\sigma_{s}$ denotes the adjacent transposition) for $s=1,2,\ldots,n-1$, and therefore
\begin{displaymath}
\mathcal{LF}_{2}(\sigma_{1})>\mathcal{LF}_{2}(\sigma_{2})>\ldots>\mathcal{LF}_{2}(\sigma_{n-1})
\end{displaymath}
\item\label{thm:LehmerNormInclusion} The inclusion $\iota_{n}:S_{n}\to S_{n+1}$ given by
\begin{displaymath}
\iota_{n}(\sigma)=(1,\sigma(1)+1,\sigma(2)+1,\ldots,\sigma(n)+1)
\end{displaymath}
preserves $\mathcal{LF}_{2}$.
\item\label{thm:LehmerNormSymmetry} $\mathcal{LF}_{2}(\sigma)=\mathcal{LF}_{2}(\sigma^{-1})$ for all $\sigma \in S_{n}$.  
\item\label{thm:LehmerNormTriangle} $\mathcal{LF}_{2}(\sigma\tau) \leq \mathcal{LF}_{2}(\sigma)+\mathcal{LF}_{2}(\tau)$ for all $\sigma,\tau \in S_{n}$.  
\item\label{thm:LehmerNormTransposition} Let $\tau=\sigma\sigma_{s}$. Then
\begin{displaymath}
\left|\mathcal{LF}_{2}(\tau)-\mathcal{LF}_{2}(\sigma)\right|=2^{-\min\{c_{s}(\sigma),c_{s+1}(\sigma)\}}\mathcal{LF}_{2}(\sigma_{s})
\end{displaymath}
\end{enumerate}
\end{thm}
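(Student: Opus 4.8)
The whole argument rests on one reformulation. Re-indexing the defining sum by $i=n-j$ turns it into
\[
\mathcal{LF}_{2}(\sigma)=\sum_{i=1}^{n}h_{i}\bigl(c_{i}(\sigma)\bigr),\qquad h_{i}(c):=2^{n-i}-2^{n-i-c}=2^{n-i}(1-2^{-c}),
\]
valid because $k_{n-i}(\sigma)=c_{i}(\sigma)$ and $0\le c_{i}(\sigma)\le n-i$. I would first record three elementary facts about $h_{i}$: it is nonnegative and strictly increasing in $c$ with $h_{i}(0)=0$; it is maximised at $c=n-i$, where $h_{i}(n-i)=2^{n-i}-1$; and it obeys the splitting identity $h_{i}(a+b)=h_{i}(a)+2^{-a}h_{i}(b)$. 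Parts (\ref{thm:LehmerNormMinimal})--(\ref{thm:LehmerNormOrder}) then fall out by computing Lehmer codes of the relevant permutations. For (\ref{thm:LehmerNormMinimal}), $e_{n}$ has $c_{i}=0$ throughout, so every summand vanishes; minimality is $h_{i}\ge 0$, and $\mathcal{LF}_{2}(\sigma)=0$ forces $c_{i}(\sigma)=0$ for all $i$, i.e.\ $\sigma$ is increasing, hence $\sigma=e_{n}$. For (\ref{thm:LehmerNormMaximal}), $\bar{e_{n}}$ is strictly decreasing so $c_{i}=n-i$, giving $\sum_{i}(2^{n-i}-1)=2^{n}-(n+1)$; maximality and uniqueness follow from $h_{i}(c)\le h_{i}(n-i)$ with equality only at $c=n-i$, forcing $\sigma$ decreasing. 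For (\ref{thm:LehmerNormOrder}), Lemma \ref{lem:LehmerCodeTransposition}(\ref{lem:LehmerCodeTranspositionSingle}) gives $c_{i}(\sigma_{s})=\delta_{is}$, so only the $i=s$ summand survives, equal to $h_{s}(1)=2^{n-s-1}=2^{n-1-s}$; monotonicity in $s$ is immediate.

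For (\ref{thm:LehmerNormInclusion}) I would compute the Lehmer code of $\tau=\iota_{n}(\sigma)$ directly: since $\tau(1)=1$ and $\tau(i+1)=\sigma(i)+1$, one checks $c_{1}(\tau)=0$ and $c_{i+1}(\tau)=c_{i}(\sigma)$. In $S_{n+1}$ the new leading entry contributes $h_{1}(0)=0$, while the summand at position $i+1$ equals the one at position $i$ in $S_{n}$ because the exponent $(n+1)-(i+1)=n-i$ matches; thus $\mathcal{LF}_{2}(\iota_{n}(\sigma))=\mathcal{LF}_{2}(\sigma)$ with no cancellation needed. For (\ref{thm:LehmerNormSymmetry}) I would use the form $\mathcal{LF}_{2}(\sigma)=(2^{n}-1)-\sum_{i=1}^{n}2^{n-(i+c_{i}(\sigma))}$, obtained from $\sum_{i}2^{n-i}=2^{n}-1$. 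By Lemma \ref{lem:LehmerCodeProperties}(\ref{lem:LehmerCodePropertiesInverse}), $i+c_{i}(\sigma)=\sigma(i)+c_{\sigma(i)}(\sigma^{-1})$, and running $i$ through the bijection $i\mapsto\sigma(i)$ turns $\sum_{i}2^{n-(i+c_{i}(\sigma))}$ into $\sum_{m}2^{n-(m+c_{m}(\sigma^{-1}))}$; substituting back gives $\mathcal{LF}_{2}(\sigma)=\mathcal{LF}_{2}(\sigma^{-1})$.

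The triangle inequality (\ref{thm:LehmerNormTriangle}) is the main obstacle, because Lemma \ref{lem:LehmerCodeProperties}(\ref{lem:LehmerCodePropertiesComposition}) mixes positions: $c_{i}(\sigma\tau)\le c_{i}(\tau)+c_{\tau(i)}(\sigma)$ ties the summand at position $i$ to a code entry of $\sigma$ read at the shifted position $\tau(i)$, and a naive termwise bound fails. My plan is to combine monotonicity of $h_{i}$, the splitting identity, and Lemma \ref{lem:LehmerCodeProperties}(\ref{lem:LehmerCodePropertiesValue}). Monotonicity together with Lemma \ref{lem:LehmerCodeProperties}(\ref{lem:LehmerCodePropertiesComposition}) gives $h_{i}(c_{i}(\sigma\tau))\le h_{i}\bigl(c_{i}(\tau)+c_{\tau(i)}(\sigma)\bigr)$, and the splitting identity with $a=c_{i}(\tau)$, $b=c_{\tau(i)}(\sigma)$ rewrites the right side as $h_{i}(c_{i}(\tau))+2^{-c_{i}(\tau)}h_{i}(c_{\tau(i)}(\sigma))$. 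The crucial step is then the pointwise estimate
\[
2^{-c_{i}(\tau)}h_{i}\bigl(c_{\tau(i)}(\sigma)\bigr)\le h_{\tau(i)}\bigl(c_{\tau(i)}(\sigma)\bigr),
\]
which, after cancelling the common nonnegative factor $1-2^{-c_{\tau(i)}(\sigma)}$ (the inequality being trivial when it vanishes), reduces exactly to $2^{\,n-i-c_{i}(\tau)}\le 2^{\,n-\tau(i)}$, i.e.\ to $\tau(i)\le i+c_{i}(\tau)$ --- precisely Lemma \ref{lem:LehmerCodeProperties}(\ref{lem:LehmerCodePropertiesValue}) applied to $\tau$. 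Summing over $i$, the first pieces give $\mathcal{LF}_{2}(\tau)$, and since $i\mapsto\tau(i)$ is a bijection, $\sum_{i}h_{\tau(i)}(c_{\tau(i)}(\sigma))=\sum_{m}h_{m}(c_{m}(\sigma))=\mathcal{LF}_{2}(\sigma)$, yielding the claim. Finding the splitting so that the index defect is absorbed precisely by the bound $\tau(i)\le i+c_{i}(\tau)$ is the whole difficulty; everything else is bookkeeping.

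Finally, for (\ref{thm:LehmerNormTransposition}) with $\tau=\sigma\sigma_{s}$, Lemma \ref{lem:LehmerCodeTransposition}(\ref{lem:LehmerCodeTranspositionMultiplication}) shows only the entries at $s$ and $s+1$ change, so only the summands $h_{s}$ and $h_{s+1}$ are affected. Writing $a=c_{s}(\sigma)$, $b=c_{s+1}(\sigma)$ and substituting the two cases of the lemma, I would compute the two-term difference: in the case $\sigma(s)<\sigma(s+1)$ it collapses to $\mathcal{LF}_{2}(\tau)-\mathcal{LF}_{2}(\sigma)=2^{n-s-1-a}$, and in the case $\sigma(s)>\sigma(s+1)$ to $-2^{n-s-1-b}$, the remaining terms cancelling in pairs. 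It remains to identify the exponent with $\min\{a,b\}$: a short comparison of the inversion sets shows $\sigma(s)<\sigma(s+1)$ forces $c_{s}(\sigma)\le c_{s+1}(\sigma)$ while $\sigma(s)>\sigma(s+1)$ forces $c_{s+1}(\sigma)<c_{s}(\sigma)$, so in both cases the exponent is $n-1-s-\min\{c_{s}(\sigma),c_{s+1}(\sigma)\}$. Since $\mathcal{LF}_{2}(\sigma_{s})=2^{n-1-s}$ by (\ref{thm:LehmerNormOrder}), taking absolute values gives $\left|\mathcal{LF}_{2}(\tau)-\mathcal{LF}_{2}(\sigma)\right|=2^{-\min\{c_{s}(\sigma),c_{s+1}(\sigma)\}}\mathcal{LF}_{2}(\sigma_{s})$.
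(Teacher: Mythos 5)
Your proposal is correct and takes essentially the same route as the paper: parts (\ref{thm:LehmerNormMinimal})--(\ref{thm:LehmerNormInclusion}) and (\ref{thm:LehmerNormTransposition}) by the same direct Lehmer-code computations via Lemma \ref{lem:LehmerCodeTransposition}, part (\ref{thm:LehmerNormSymmetry}) via Lemma \ref{lem:LehmerCodeProperties} (\ref{lem:LehmerCodePropertiesInverse}) plus reindexing, and part (\ref{thm:LehmerNormTriangle}) reducing, once your splitting identity is unwound, to exactly the paper's pointwise inequality derived from Lemma \ref{lem:LehmerCodeProperties} (\ref{lem:LehmerCodePropertiesValue}) and (\ref{lem:LehmerCodePropertiesComposition}) together with the bijection $j\mapsto\tau(j)$. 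The $h_{i}$-with-splitting formulation is only a repackaging of the paper's elementary numerical fact about $a,b,c,d,e$.
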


\begin{proof}{}
(\ref{thm:LehmerNormMinimal}) Note that $\mathcal{LF}_{2}(\sigma)\geq 0$ with the equality holds only if $k_{i}(\sigma)=0$ for $i=0,1,\ldots,n-1$. In such a case $\sigma=e_{n}$.

(\ref{thm:LehmerNormMaximal}) The proof is similar to the one of (\ref{thm:LehmerNormMinimal}). Namely, $\mathcal{LF}_{2}(\sigma)$ is maximal only if $k_{i}(\sigma)=i$ for $i=0,1,\ldots,n-1$ and this implies $\sigma=\bar{e}_{n}$.

(\ref{thm:LehmerNormOrder}) It is enough to see that $c_{i}(\sigma_{s})=\delta_{is}$ 
(see Lemma \ref{lem:LehmerCodeTransposition} (\ref{lem:LehmerCodeTranspositionSingle})).

(\ref{thm:LehmerNormInclusion}) Follows the fact that for $\sigma=(\sigma(1),\sigma(2),\ldots,\sigma(n))$ and 
$\iota_{n}(\sigma)=(1,1+\sigma(1),1+\sigma(2),\ldots,1+\sigma(n))$ we have $c_{1}(\iota_{n}(\sigma))=0$ and 
$c_{i}(\iota_{n}(\sigma))=c_{i-1}(\sigma)$ for $i=2,3,\ldots,n+1$. 

(\ref{thm:LehmerNormSymmetry}) First note, that
\begin{displaymath}
\mathcal{LF}_{2}(\sigma)=\sum_{i=0}^{n-1}\left[2^{i}-2^{i-k_{i}(\sigma)}\right]=
\sum_{i=0}^{n-1}\left[2^{i}-2^{i-c_{n-i}(\sigma)}\right]= 
\sum_{j=1}^{n}\left[2^{n-j}-2^{n-j-c_{j}(\sigma)}\right]
\end{displaymath}
Consequently, the equality 
$\mathcal{LF}_{2}(\sigma)=\mathcal{LF}_{2}(\sigma^{-1})$ is equivalent to
\begin{displaymath}
\sum_{j=1}^{n}2^{n-j-c_{j}(\sigma)}=\sum_{j=1}^{n}2^{n-j-c_{j}(\sigma^{-1})}
\end{displaymath}
Now according to Corollary \ref{cor:LehmerCodeProperties} 
\begin{displaymath}
n-j-c_{j}(\sigma^{-1})=n-j-[c_{\sigma^{-1}(j)}(\sigma)+\sigma^{-1}(j)-j]=n-\sigma^{-1}(j)-c_{\sigma^{-1}(j)}(\sigma)
\end{displaymath}
hence it is enough to notice that
\begin{displaymath}
\sum_{j=1}^{n}2^{n-j-c_{j}(\sigma)}=\sum_{j=1}^{n}2^{n-\sigma^{-1}(j)-c_{\sigma^{-1}(j)}(\sigma)}
\end{displaymath}
is just change of order of summation.
The last equality holds since for $j$ taking all values from $[n]$ the same holds for $\sigma^{-1}(j)$.

(\ref{thm:LehmerNormTriangle}) We have the following equalities:
\begin{displaymath}
\mathcal{LF}_{2}(\sigma)=\sum_{j=1}^{n}\left[2^{n-j}-2^{n-j-c_{j}(\sigma)}\right]
\end{displaymath}
\begin{displaymath}
\mathcal{LF}_{2}(\tau)=\sum_{j=1}^{n}\left[2^{n-j}-2^{n-j-c_{j}(\tau)}\right]
\end{displaymath}
and
\begin{displaymath}
\mathcal{LF}_{2}(\sigma\tau)=\sum_{j=1}^{n}\left[2^{n-j}-2^{n-j-c_{j}(\sigma\tau)}\right]
\end{displaymath}
Therefore the inequality
\begin{displaymath}
\mathcal{LF}_{2}(\sigma\tau) \leq \mathcal{LF}_{2}(\sigma)+\mathcal{LF}_{2}(\tau)
\end{displaymath}
is equivalent to the following ones
\begin{displaymath}
\sum_{j=1}^{n}\left[2^{n-j}-2^{n-j-c_{j}(\sigma\tau)}\right] \leq 
\sum_{j=1}^{n}\left[2^{n-j}-2^{n-j-c_{j}(\sigma)}\right] +
\sum_{j=1}^{n}\left[2^{n-j}-2^{n-j-c_{j}(\tau)}\right]
\end{displaymath}
\begin{displaymath}
\sum_{j=1}^{n}2^{n-j-c_{j}(\tau)}+ 
\sum_{j=1}^{n}2^{n-j-c_{j}(\sigma)} \leq 
\sum_{j=1}^{n}2^{n-j} +
\sum_{j=1}^{n}2^{n-j-c_{j}(\sigma\tau)}
\end{displaymath}
\begin{displaymath}
\sum_{j=1}^{n}\frac{1}{2^{j+c_{j}(\tau)}}+ 
\sum_{j=1}^{n}\frac{1}{2^{j+c_{j}(\sigma)}} \leq 
\sum_{j=1}^{n}\frac{1}{2^{j}} +
\sum_{j=1}^{n}\frac{1}{2^{j+c_{j}(\sigma\tau)}}
\end{displaymath}
\begin{displaymath}
\sum_{j=1}^{n}\frac{1}{2^{j+c_{j}(\tau)}}+ 
\sum_{j=1}^{n}\frac{1}{2^{\tau(j)+c_{\tau(j)}(\sigma)}} \leq 
\sum_{j=1}^{n}\frac{1}{2^{\tau(j)}} +
\sum_{j=1}^{n}\frac{1}{2^{j+c_{j}(\sigma\tau)}}
\end{displaymath}
The last inequality holds since for $j$ taking all values from $[n]$ the same holds for $\tau(j)$.

To finish the proof, it is enough to show that for every $j=1,2,\ldots,n$ we have
\begin{equation}\label{equa:temp}
\frac{1}{2^{j+c_{j}(\tau)}}+\frac{1}{2^{\tau(j)+c_{\tau(j)}(\sigma)}} \leq
\frac{1}{2^{\tau(j)}}+\frac{1}{2^{j+c_{j}(\sigma\tau)}}
\end{equation}
By Lemma \ref{lem:LehmerCodeProperties} (\ref{lem:LehmerCodePropertiesValue}) and (\ref{lem:LehmerCodePropertiesComposition}),
\begin{equation} 
\label{equa:LehmerCodeProperties}
c_{j}(\sigma\tau) \leq c_{j}(\tau)+c_{\tau(j)}(\sigma) \textrm{ and }
\tau(j)\leq j+c_{j}(\tau). 
\end{equation}

Since for non negative numbers $a$, $b$, $c$, $d$ and $e$ satisfying 
\begin{displaymath}
e \leq b+d \textrm{ and } c\leq a+b 
\end{displaymath}
it holds
\begin{displaymath}
\frac{1}{2^{a+b}}+\frac{1}{2^{c+d}}\leq\frac{1}{2^{c}}+\frac{1}{2^{a+e}}
\end{displaymath}
hence (\ref{equa:temp}) is a consequence of (\ref{equa:LehmerCodeProperties}) by substitution
\begin{displaymath}
a = j \textrm{, } b = c_{j}(\tau) \textrm{, } c= \tau(j) \textrm{, } d = c_{\tau(j)}(\sigma) \textrm{ and } e = c_{j}(\sigma\tau) 
\end{displaymath}

(\ref{thm:LehmerNormTransposition})
\begin{gather*}
\mathcal{LF}_{2}(\tau)-\mathcal{LF}_{2}(\sigma)=
\sum_{i=0}^{n-1}\left[2^{i}-2^{i-k_{i}(\tau)}\right]-\sum_{i=0}^{n-1}\left[2^{i}-2^{i-k_{i}(\sigma)}\right]=
\sum_{i=0}^{n-1}\left[2^{i-k_{i}(\sigma)}-2^{i-k_{i}(\tau)}\right]= \\
=\sum_{i=0}^{n-1}\left[2^{i-c_{n-i}(\sigma)}-2^{i-c_{n-i}(\tau)}\right]=
\sum_{j=1}^{n}\left[2^{n-j-c_{j}(\sigma)}-2^{n-j-c_{j}(\tau)}\right]
\end{gather*} 
Now according to Lemma \ref{lem:LehmerCodeTransposition} (\ref{lem:LehmerCodeTranspositionMultiplication}) 
$c_{j}(\tau)=c_{j}(\sigma)$ for $j \neq s,s+1$, hence
\begin{displaymath}
\mathcal{LF}_{2}(\tau)-\mathcal{LF}_{2}(\sigma)=
\left[2^{n-s-c_{s}(\sigma)}-2^{n-s-c_{s}(\tau)}\right]+\left[2^{n-s-1-c_{s+1}(\sigma)}-2^{n-s-1-c_{s+1}(\tau)}\right]
\end{displaymath}
If $\sigma(s)<\sigma(s+1)$, then by Lemma \ref{lem:LehmerCodeTransposition} (\ref{lem:LehmerCodeTranspositionMultiplication})
\begin{displaymath}
c_{s}(\tau)=c_{s+1}(\sigma)+1 \textrm{ and }
c_{s+1}(\tau)=c_{s}(\sigma)
\end{displaymath}
Therefore
\begin{gather*}
\mathcal{LF}_{2}(\tau)-\mathcal{LF}_{2}(\sigma)=
\left[2^{n-s-c_{s}(\sigma)}-2^{n-s-c_{s+1}(\sigma)-1}\right]+\left[2^{n-s-1-c_{s+1}(\sigma)}-2^{n-s-1-c_{s}(\sigma)}\right]=\\
=2^{n-s-1-c_{s}(\sigma)}=2^{-c_{s}(\sigma)}\mathcal{LF}_{2}(\sigma_{s})
\end{gather*}
Otherwise 
\begin{displaymath}
c_{s}(\tau)=c_{s+1}(\sigma) \textrm{ and }
c_{s+1}(\tau)=c_{s}(\sigma)-1 
\end{displaymath}
and therefore
\begin{gather*}
\mathcal{LF}_{2}(\tau)-\mathcal{LF}_{2}(\sigma)=
\left[2^{n-s-c_{s}(\sigma)}-2^{n-s-c_{s+1}(\sigma)}\right]+\left[2^{n-s-1-c_{s+1}(\sigma)}-2^{n-s-1-c_{s}(\sigma)+1}\right]=\\
=-2^{n-s-1-c_{s+1}(\sigma)}=-2^{-c_{s+1}(\sigma)}\mathcal{LF}_{2}(\sigma_{s})
\end{gather*}
The last equalities in both cases are due to (\ref{thm:LehmerNormOrder}).

To finish the proof, it is enough to notice that: if $\sigma(s)<\sigma(s+1)$, then $c_{s}(\sigma) \leq c_{s+1}(s)$, otherwise $c_{s+1}(\sigma)<c_{s}(\sigma)$.
\end{proof}

\section{The distribution of the Lehmer factorial norm}\label{sec:Distribution}

In this section we examine the properties of the probability distribution function of the values of the Lehmer norm. 

We start with the following theorem, the proof of which is due to K. Majcher.

\begin{thm}\label{thm:AllPermutations}
The direct limit of the system of groups $(S_{n},\iota_{n})$ is given by
\begin{displaymath}
\varinjlim{(S_{n},\iota_{n})} \cong S_{\infty}
\end{displaymath}
\end{thm}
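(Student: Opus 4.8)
The plan is to realise the given inductive system as \emph{conjugate}, via the reversal automorphisms, to the standard chain of inclusions, whose direct limit is manifestly $S_{\infty}$. First I would record that each $\iota_{n}$ is an injective group homomorphism, so that $(S_{n},\iota_{n})$ genuinely is an inductive system of groups. Writing $s(i)=i+1$, the definition shows that $\iota_{n}(\sigma)$ fixes $1$ and agrees with $s\sigma s^{-1}$ on $\{2,\dots,n+1\}$; since conjugation by a fixed injection is multiplicative and $1$ is fixed by every $\iota_{n}(\sigma)$, the identity $\iota_{n}(\sigma\tau)=\iota_{n}(\sigma)\iota_{n}(\tau)$ follows, and injectivity is immediate.

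Next, let $j_{n}\colon S_{n}\to S_{n+1}$ be the standard inclusion fixing $n+1$, i.e.\ $j_{n}(\sigma)(i)=\sigma(i)$ for $i\le n$ and $j_{n}(\sigma)(n+1)=n+1$. For each $n$ let $\Phi_{n}\colon S_{n}\to S_{n}$ be conjugation by the reversal $\bar{e}_{n}$, that is $\Phi_{n}(\sigma)=\bar{e}_{n}\,\sigma\,\bar{e}_{n}$ (note $\bar{e}_{n}$ is an involution), so that $\Phi_{n}(\sigma)(a)=n+1-\sigma(n+1-a)$. Each $\Phi_{n}$ is a group automorphism. The key computation is to check that the square built from $\iota_{n}$, $j_{n}$, $\Phi_{n}$, $\Phi_{n+1}$ commutes, i.e.
\[
\Phi_{n+1}\circ\iota_{n}=j_{n}\circ\Phi_{n}.
\]
Evaluating the left-hand side at $a$ gives $(n+2)-\iota_{n}(\sigma)(n+2-a)$; for $a=n+1$ this equals $n+1$ (matching that $j_{n}$ fixes $n+1$), while for $a\le n$, writing $n+2-a=i+1$ one gets $(n+1)-\sigma(n+1-a)=\Phi_{n}(\sigma)(a)=j_{n}(\Phi_{n}(\sigma))(a)$, as required.

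Thus $(\Phi_{n})_{n}$ is an isomorphism of inductive systems from $(S_{n},\iota_{n})$ to $(S_{n},j_{n})$, and functoriality of the direct limit yields $\varinjlim(S_{n},\iota_{n})\cong\varinjlim(S_{n},j_{n})$. Finally $\varinjlim(S_{n},j_{n})\cong S_{\infty}$: every $\rho\in S_{\infty}$ has finite support, hence lies in some $S_{N}$ acting on $\{1,\dots,N\}$ and fixing the rest, and two such representatives coincide in $S_{\infty}$ exactly when they become equal after finitely many of the $j_{n}$; this is precisely the textbook realisation of $S_{\infty}$ as $\bigcup_{n}S_{n}$ along the standard inclusions. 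I expect the only real work to be the commuting-square computation: once the shift-inclusion $\iota_{n}$ is recognised as the reversal-conjugate of the ordinary inclusion $j_{n}$, the remainder is formal — functoriality of $\varinjlim$ together with the standard identification of the limit of the ordinary chain with $S_{\infty}$.
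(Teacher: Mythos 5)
Your proof is correct and follows essentially the same route as the paper: both identify $S_{\infty}$ with $\varinjlim(S_{n},j_{n})$ along the standard inclusions and then exhibit the commuting squares $\Phi_{n+1}\circ\iota_{n}=j_{n}\circ\Phi_{n}$ with $\Phi_{n}$ the conjugation by the reversal $\bar{e}_{n}$. The only difference is presentational — you spell out the commuting-square computation (which the paper leaves to the reader) while citing the standard identification $\varinjlim(S_{n},j_{n})\cong S_{\infty}$ that the paper writes out explicitly.
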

\begin{proof}{} Note first, that
\begin{displaymath}
S_{\infty} \cong \varinjlim{(S_{n},j_{n})}
\end{displaymath}
where $j_{n}:S_{n}\to S_{n+1}$ is given by 
\begin{displaymath}
j_{n}(\sigma(1),\sigma(2),\ldots,\sigma(n))=(\sigma(1),\sigma(2),\ldots,\sigma(n),n+1)
\end{displaymath}
To see this for $\sigma \in S_{\infty}$ let $K(\sigma)$ be a minimal natural number such that $\sigma(k)=k$ for all $k\geq K(\sigma)$.
Define 
\begin{displaymath}
F(\sigma) =
\begin{cases}
(1) \in S_{1}  \textrm{ if } K(\sigma)=1 \\ 
(\sigma(1),\ldots,\sigma(K(\sigma)-1)) \in S_{K(\sigma)-1} \textrm{ if } K(\sigma)>2
\end{cases}
\end{displaymath}
(note, that it is impossible to have $K(\sigma)=2$). $F(\sigma)$ determines the unique element
\begin{displaymath} 
G(\sigma)=\left(F(\sigma),j_{K(\sigma)-1}(F(\sigma)),\ldots\right)_{\sim} \in \varinjlim{(S_{n},j_{n})}
\end{displaymath}
It is easy to see that $G$ is the isomorphism between $S_{\infty}$ and $\varinjlim{(S_{n},j_{n})}$.

To finish the proof it is enough to note that for $t_{n}$ being the conjugacy by $\bar{e}_{n}$ the following diagrams commutes:
\begin{displaymath}
\begin{CD}
S_{n} @>\iota_{n}>> S_{n+1} \\
@Vt_{n}VV @VVt_{n+1}V \\
S_{n} @>>j_{n}> S_{n+1} 
\end{CD}
\end{displaymath}
\end{proof}

Note, that due to Theorems \ref{thm:AllPermutations} and \ref{thm:LehmerNorm} (\ref{thm:LehmerNormInclusion}), $\mathcal{LF}_{2}$ 
can be seen as a norm on $S_{\infty}$.

We continue with the following observation concerning the properties of permutations from the image $\iota_{n-1}[S_{n-1}]$. According to the Theorem 
\ref{thm:LehmerNorm} (\ref{thm:LehmerNormMaximal}) and (\ref{thm:LehmerNormInclusion}) we have the following:
\begin{rem}\label{rem:DistributionRestriction}
Let $\sigma \in S_{n}$ be a permutation.
If $c_{1}(\sigma)=0$, then 
\begin{displaymath}
\mathcal{LF}_{2}(\sigma)\leq 2^{n-1}-n
\end{displaymath}
On the other hand, if $c_{1}(\sigma)>0$, then
\begin{displaymath}
\mathcal{LF}_{2}(\sigma)\geq 2^{n-2}
\end{displaymath}
\end{rem} 

The following definition will be crucial to dermine the distribution of $\mathcal{LF}_{2}$ on $S_{\infty}$. 
\begin{defin}\label{def:DecompositionNumbers}
For a natural number $m>0$ and for $k=0,1,\ldots$ let
\begin{gather*}
S_{k}(m)= \bigcup_{t \geq 1}\Big\{ ((m_{1},l_{1}),(m_{2},l_{2}),\ldots,(m_{t},l_{t}))\in \mathbb{N}^{2t}: 
k=m_{1}>m_{2}>\ldots>m_{t}\geq 0 \textrm{; } \\
m_{j}\geq l_{j} \geq 0 \textrm{ for } j=1,2,\ldots,t \textrm{; } 
m=\sum_{j=1}^{t}\left[\sum_{p=0}^{l_{j}} 2^{m_{j}-p}\right]  \Big\}
\end{gather*}
and let $s_{k}(m)=|S_{k}(m)|$.
\end{defin}
\begin{lem}\label{lem:DecompositionNumbers}
Let $m=\sum_{j=1}^{s}2^{m_{j}}$ for some natural numbers $m_{1}>m_{2}>\ldots>m_{s}\geq 0$. Then $s_{k}(m)=0$ for all 
$k \neq m_{1},m_{1}-1$.
\end{lem}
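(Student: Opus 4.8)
The plan is to trap $m$ between two consecutive powers of $2$ expressed in terms of $k$, using a closed form for each summand. Write an arbitrary element of $S_{k}(m)$ as $((a_{1},l_{1}),\ldots,(a_{t},l_{t}))$ with $k=a_{1}>a_{2}>\cdots>a_{t}\geq 0$ and $0\leq l_{j}\leq a_{j}$; I rename the first coordinates to $a_{j}$ to avoid a clash with the exponents $m_{1}>\cdots>m_{s}$ appearing in the binary expansion of $m$ in the statement. The key computation is the geometric identity $\sum_{p=0}^{l_{j}}2^{a_{j}-p}=2^{a_{j}+1}-2^{a_{j}-l_{j}}$, so that $m=\sum_{j=1}^{t}\bigl(2^{a_{j}+1}-2^{a_{j}-l_{j}}\bigr)$.

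First I would establish the lower bound $m\geq 2^{k}$. The summand with $j=1$ and $p=0$ equals $2^{a_{1}}=2^{k}$, and every remaining term $2^{a_{j}-p}$ is nonnegative, so $m\geq 2^{k}$. Consequently the leading exponent $m_{1}$ of the binary expansion of $m$ satisfies $m_{1}\geq k$.

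Next I would establish the strict upper bound $m<2^{k+2}$. Since $a_{1}>a_{2}>\cdots>a_{t}$ are distinct with $a_{1}=k$, the integers $a_{j}+1$ are distinct and lie in $\{1,\ldots,k+1\}$, whence $\sum_{j=1}^{t}2^{a_{j}+1}\leq\sum_{i=1}^{k+1}2^{i}=2^{k+2}-2$. Dropping the strictly positive subtracted terms $2^{a_{j}-l_{j}}$ (the $j=1$ term alone is at least $1$) gives $m<\sum_{j=1}^{t}2^{a_{j}+1}\leq 2^{k+2}-2<2^{k+2}$, so $m_{1}\leq k+1$.

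Combining the two bounds yields $k\leq m_{1}\leq k+1$, i.e. $k\in\{m_{1}-1,m_{1}\}$; equivalently $S_{k}(m)=\emptyset$, and hence $s_{k}(m)=0$, for every $k\notin\{m_{1},m_{1}-1\}$. The only care needed is the bookkeeping between the two meanings of the symbols $m_{j}$ (binary digits of $m$ versus decomposition tops) and the observation that the strict decrease of the $a_{j}$ turns $\sum_{j}2^{a_{j}+1}$ into a sum of distinct powers bounded by $2^{k+2}-2$; beyond this there is no genuine obstacle, since both inequalities follow directly from the geometric identity.
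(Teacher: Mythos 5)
Your proof is correct and takes essentially the same route as the paper's: both arguments trap $m$ between $2^{k}$ (coming from the leading term $2^{a_1}=2^{k}$ of any decomposition in $S_{k}(m)$) and $2^{k+2}$ (coming from an upper bound on the whole sum), which forces $k\in\{m_{1},m_{1}-1\}$. The only cosmetic difference is that you derive the upper bound via the closed form $\sum_{p=0}^{l_j}2^{a_j-p}=2^{a_j+1}-2^{a_j-l_j}$ and a sum of distinct powers, whereas the paper bounds by the worst case $\sum_{i=0}^{k}(2^{i+1}-1)$ and phrases the two excluded ranges of $k$ as separate contradictions.
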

\begin{proof}{} 
Suppose first that $k>m_{1}$. Therefore
\begin{displaymath}
m=\sum_{j=1}^{s}2^{m_{j}}\leq \sum_{i=0}^{m_{1}}2^{i}=2^{m_{1}+1}-1<2^{k}
\end{displaymath}
hence $m$ cannot be decomposed as a sum given by any element 
\begin{displaymath}
\left((m_{1},l_{1}),(m_{2},l_{2}),\ldots,(m_{t},l_{t})\right) \in S_{k}(m)
\end{displaymath}

On the other hand, if $k<m_{1}-1$, then (for $\bar{m}_{1}=k$)
\begin{displaymath}
\sum_{j=1}^{t}\left[\sum_{p=0}^{\bar{l}_{j}} 2^{\bar{m}_{j}-p}\right] \leq
\sum_{i=0}^{k}\left[\sum_{r=0}^{i} 2^{r}\right] =
\sum_{i=0}^{k}\left[2^{i+1}-1\right] = 2^{k+2}-(k+2)<2^{k+2}\leq 2^{m_{1}}\leq m
\end{displaymath} 
and similarily, $m$ cannot be decomposed as a sum given any element 
\begin{displaymath}
\left((\bar{m}_{1},\bar{l}_{1}),(\bar{m}_{2},\bar{l}_{2}),\ldots,(\bar{m}_{t},\bar{l}_{t})\right) \in S_{k}(m)
\end{displaymath}
\end{proof}

According to Lemma \ref{lem:DecompositionNumbers}
\begin{displaymath}
s(m)=\sum_{k=0}^{\infty}s_{k}(m)
\end{displaymath}
is a well defined natural number. Moreover, the following holds:
\begin{thm}\label{thm:Distribution}
Put $s(0)=1$. Then for every natural number $m$ we have
\begin{displaymath} 
s(m) =\left|\left\{\sigma \in S_{\infty}:\mathcal{LF}_{2}(\sigma)=m\right\}\right|
\end{displaymath}
\end{thm}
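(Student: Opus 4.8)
The plan is to reduce the count of permutations to a count of admissible Lehmer sequences and then to match those sequences with the tuples enumerated by $\bigsqcup_{k\ge0}S_k(m)$ by an explicit bijection. First I would make the norm on $S_\infty$ fully explicit in terms of stable Lehmer data. By Theorem \ref{thm:AllPermutations} every element of $S_\infty$ has a representative $\sigma\in S_n$, and by Theorem \ref{thm:LehmerNorm}(\ref{thm:LehmerNormInclusion}) the value $\mathcal{LF}_2$ does not depend on the chosen representative. Writing $k_i(\sigma)=c_{n-i}(\sigma)$ as in Definition \ref{def:LehmerNorm}, the computation already used for Theorem \ref{thm:LehmerNorm}(\ref{thm:LehmerNormInclusion}) (namely $c_1(\iota_n\sigma)=0$ and $c_i(\iota_n\sigma)=c_{i-1}(\sigma)$) shows that $k_i(\iota_n\sigma)=k_i(\sigma)$ for $0\le i\le n-1$ and $k_n(\iota_n\sigma)=0$; since also $k_0=c_n=0$ always, the sequence $(k_i)_{i\ge1}$ is a complete invariant of the limit element. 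The Lehmer-code bijection between $S_n$ and $\prod_{i=1}^{n-1}\{0,1,\dots,i\}$ then identifies $S_\infty$ with the set of eventually-zero sequences $(k_i)_{i\ge1}$ satisfying $0\le k_i\le i$, under which $\mathcal{LF}_2(\sigma)=\sum_{i\ge1}\bigl(2^{i}-2^{i-k_i}\bigr)$. This turns the theorem into the assertion that the number of such sequences with $\sum_{i\ge1}(2^i-2^{i-k_i})=m$ equals $s(m)$.

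Next I would exploit the block form of each summand. For an active index $i$ (one with $k_i>0$) one has $2^{i}-2^{i-k_i}=\sum_{p=0}^{k_i-1}2^{(i-1)-p}$, a run of $k_i$ consecutive powers of two with top exponent $i-1$. Listing the active indices as $i_1>i_2>\dots>i_t$ and sending the sequence $(k_i)$ to the tuple with $(m_j,l_j)=(i_j-1,\,k_{i_j}-1)$, the constraint $0\le k_{i_j}\le i_j$ becomes $0\le l_j\le m_j$, the strict ordering of indices becomes $m_1>m_2>\dots>m_t\ge0$, and the norm value becomes exactly $m=\sum_{j=1}^t\sum_{p=0}^{l_j}2^{m_j-p}$. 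This is a bijection between nonzero admissible sequences of norm $m$ and the disjoint union $\bigsqcup_{k\ge0}S_k(m)$, the integer $k=m_1=i_1-1$ recording the largest active index; its inverse reads off $i_j=m_j+1$, $k_{i_j}=l_j+1$, and sets all other $k_i=0$.

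Combining the two steps and using that $s(m)=\sum_k s_k(m)$ is a finite sum (Lemma \ref{lem:DecompositionNumbers}) gives $\bigl|\{\sigma\in S_\infty:\mathcal{LF}_2(\sigma)=m\}\bigr|=s(m)$ for every $m>0$. The case $m=0$ must be treated separately: the zero sequence (the identity) is the unique element of norm $0$, and it corresponds to the empty tuple ($t=0$), which is excluded from every $S_k$; this is exactly why the convention $s(0)=1$ is imposed. The main obstacle is the first step: one has to verify carefully that the index-shifted data $k_i=c_{n-i}$ is genuinely stable along the maps $\iota_n$ and constitutes a complete invariant of the limit, since the shift itself depends on $n$. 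Once that is secured, the block decomposition and the resulting bijection are routine.
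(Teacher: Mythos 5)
Your proposal is correct and follows essentially the same route as the paper: both arguments pass to the Lehmer code of a minimal (equivalently, stable) representative, expand each nonzero term as the block $2^{i}-2^{i-k_{i}}=\sum_{p=0}^{k_{i}-1}2^{(i-1)-p}$, and set up the bijection $(m_{j},l_{j})=(i_{j}-1,k_{i_{j}}-1)$ with the tuples in $S_{k}(m)$, treating $m=0$ separately. Your explicit verification that $(k_{i})_{i\ge 1}$ is stable under $\iota_{n}$ is a slightly more careful phrasing of what the paper achieves by fixing the minimal $n$ with $c_{1}(\sigma)>0$, but the substance is identical.
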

\begin{proof}{}
The statement is obvious for $m=0$.

Consider $\sigma\in S_{\infty}$ different from the identity. Let $n$ be the minimal natural number such that $\sigma$ can be regarded as an element of $S_{n}$. Since $\sigma$ is not the identity, it follows that $n>1$. Finally, let $m=\mathcal{LF}_{2}(\sigma)$. Therefore
\begin{displaymath}
m=\mathcal{LF}_{2}(\sigma)=\sum_{i=0}^{n-1}\left[2^{i}-2^{i-k_{i}(\sigma)}\right]=
\sum_{\substack{i=0 \\ k_{i}(\sigma)\neq 0}}^{n-1}\left[\sum_{p=0}^{k_{i}(\sigma)-1}2^{(i-1)-p}\right]=
\sum_{\substack{i=1 \\ k_{i}(\sigma)\neq 0}}^{n-1}\left[\sum_{p=0}^{k_{i}(\sigma)-1}2^{(i-1)-p}\right]
\end{displaymath}
Since $n$ is minimal, it follows that $k_{n-1}(\sigma)=c_{1}(\sigma)>0$. 

Let $i_{1}>i_{2}>\ldots>i_{t}$ be all elements of $[n-1]$ such that $k_{i_{j}}(\sigma)\neq 0$ for $j=1,2,\ldots,t$ (of course $t\geq 1$) and let $m_{j}=i_{j}-1$. Thus we have 
\begin{displaymath}
n-2 = m_{1} > m_{2} > \ldots > m_{t} \geq 0
\end{displaymath}
Put $l_{j}=k_{i_{j}}(\sigma)-1$ and note, that $l_{j} \leq m_{j}$.

Since
\begin{displaymath}
m=\sum_{\substack{i=1 \\ k_{i}(\sigma)\neq 0}}^{n-1}\left[\sum_{p=0}^{k_{i}(\sigma)-1}2^{(i-1)-p}\right]=
\sum_{j=1}^{t}\left[\sum_{p=0}^{l_{j}} 2^{m_{j}-p}\right]
\end{displaymath}
this decompostition of $\mathcal{LF}_{2}(\sigma)=m$ determines the element 
\begin{displaymath}
((m_{1},l_{1}),(m_{2},l_{2}),\ldots,(m_{t},l_{t})) \in S_{n-2}(m)
\end{displaymath}

On the other hand, for an element 
\begin{displaymath}
((m_{1},l_{1}),(m_{2},l_{2}),\ldots,(m_{t},l_{t})) \in S_{n}(m)
\end{displaymath} 
let $\sigma \in S_{n+2}$ be given by its Lehmer code in the following way:
\begin{displaymath}
\lehmer(\sigma)=[c_{1}(\sigma),c_{2}(\sigma),\ldots,c_{n+2}(\sigma)]=[k_{n+1}(\sigma),k_{n}(\sigma),\ldots,k_{0}(\sigma)]
\end{displaymath}
where
\begin{itemize}
\item $k_{m_{j}+1}(\sigma)=l_{j}+1$ for $j=1,2,\ldots,t$,
\item $k_{i}(\sigma)=0$ for $i\in \{0,1,\ldots,m+1\}\setminus \{m_{1}+1,m_{2}+1,\ldots,m_{t}+1\}$
\end{itemize} 
For $\sigma$ defined in such a way it holds
\begin{gather*}
\mathcal{LF}_{2}(\sigma)=\sum_{i=0}^{n+1}\left[2^{i}-2^{i-k_{i}(\sigma)}\right]=
\sum_{\substack{i=0 \\ k_{i}(\sigma)\neq 0}}^{n+1}\left[\sum_{p=0}^{k_{i}(\sigma)-1}2^{(i-1)-p}\right]=\\
=\sum_{j=1}^{t}\left[\sum_{p=0}^{k_{m_{j}+1}(\sigma)-1}2^{[(m_{j}+1)-1]-p}\right]=
\sum_{j=1}^{t}\left[\sum_{p=0}^{l_{j}}2^{m_{j}-p}\right]=m
\end{gather*}
Moreover, the assigments 
\begin{displaymath}
\sigma \rightarrow ((m_{1},l_{1}),(m_{2},l_{2}),\ldots,(m_{t},l_{t})) \textrm{ and } 
((m_{1},l_{1}),(m_{2},l_{2}),\ldots,(m_{t},l_{t})) \rightarrow \sigma
\end{displaymath}
are mutually inverse.

This finishes the proof.
\end{proof}

In the next theorem we present the recursive properties of numbers $s_{k}(m)$. Note, that Theorems \ref{thm:Distribution} and \ref{thm:Recursion} together with Lemma \ref{lem:DecompositionNumbers} fully describe the distribution of $\mathcal{LF}_{2}$ on 
$S_{\infty}$. The values of $s_{k}(m)$ for $m=1,2,\ldots,8$, together with the corresponding permutations can be found in Table 
\ref{table:DecompositionNumbers}. The graphs of functions $s(m)$ and $d(m)=\sum_{l=1}^{m}s(l)$ for $m = 1,2,\ldots,256$ are presented in Figure \ref{fig:graphs}. 

\begin{thm}\label{thm:Recursion} The following holds:
\begin{enumerate}[(i)]
\item\label{thm:Recursion0} $s(0)=1$;
\item\label{thm:Recursion1} $s_{0}(1)=1$ and $s_{k}(1)=0$ for $k>0$;
\item\label{thm:Recursion2} $s_{0}(2)=0$, $s_{1}(2)=1$ and $s_{k}(2)=0$ for $k>1$;
\item\label{thm:RecursionPower2} $s_{m}\left(2^{m}\right)=1$,
\begin{displaymath}
s_{m-1}\left(2^{m}\right)=\sum_{j=0}^{m-1}\left[\sum_{k=0}^{m-2}s_{k}\left(2^{m}-\left(2^{m-1}+\ldots+2^{j}\right)\right)\right]
\end{displaymath}
and $s_{k}\left(2^{m}\right)=0$ for $k\neq m,m-1$;
\item\label{thm:RecursionMain1} For $l=0,1,\ldots,m-1$ 
\begin{displaymath}
s_{m}\left(2^{m}+\ldots+2^{l}\right)=1+\sum_{j=l}^{m-1}\left[\sum_{k=0}^{m-1}s_{k}\left(2^{j}+\ldots+2^{l}\right)\right],
\end{displaymath} 
\begin{displaymath}
s_{m-1}\left(2^{m}+\ldots+2^{l}\right)=
\sum_{j=0}^{m-1}\left[\sum_{k=0}^{m-2}s_{k}\left(\left(2^{m}+\ldots+2^{l}\right)-\left(2^{m-1}+\ldots+2^{j}\right)\right)\right]
\end{displaymath} 
and $s_{k}\left(2^{m}+\ldots+2^{l}\right)=0$ for $k\neq m,m-1$;
\item\label{thm:RecursionMain2} For $l=2,3,\ldots,m-1$ and for $a_{0},a_{1},\ldots,a_{l-2}\in \{0,1\}$ not all being equal to $0$
\begin{gather*}
s_{m}\left(2^{m}+\ldots+2^{l}+a_{l-2}2^{l-2}+\ldots+a_{0}2^{0}\right)=\\
\sum_{k=0}^{m-1}\left[s_{k}\left(a_{l-2}2^{l-2}+\ldots+a_{0}2^{0}\right)\right]+
\sum_{j=l}^{m-1}\left[\sum_{k=0}^{m-1}s_{k}\left(2^{j}+\ldots+2^{l}+a_{l-2}2^{l-2}+\ldots+a_{0}2^{0}\right)\right],
\end{gather*}
\begin{gather*}
s_{m-1}\left(2^{m}+\ldots+2^{l}+a_{l-2}2^{l-2}+\ldots+a_{0}2^{0}\right)=\\
=\sum_{j=0}^{m-1}\left[\sum_{k=0}^{m-2}s_{k}\left(\left(2^{m}+\ldots+2^{l}+a_{l-2}2^{l-2}+\ldots+a_{0}2^{0}\right)-
\left(2^{m-1}+\ldots+2^{j}\right)\right)\right]
\end{gather*}
and $s_{k}\left(2^{m}+\ldots+2^{l}+a_{l-2}2^{l-2}+\ldots+a_{0}2^{0}\right)=0$ for $k\neq m,m-1$;
\item\label{thm:RecursionMain3}
For $m>1$ and for $a_{0},a_{1},\ldots,a_{m-2}\in \{0,1\}$ not all being equal to $0$
\begin{displaymath}
s_{m}\left(2^{m}+a_{m-2}2^{m-2}+\ldots+a_{0}2^{0}\right)=
\sum_{k=0}^{m-1}\left[s_{k}\left(a_{m-2}2^{m-2}+\ldots+a_{0}2^{0}\right)\right],
\end{displaymath}
\begin{gather*}
s_{m-1}\left(2^{m}+a_{m-2}2^{m-2}+\ldots+a_{0}2^{0}\right)=\\
=\sum_{j=0}^{m-1}\left[\sum_{k=0}^{m-2}s_{k}\left(\left(2^{m}+a_{m-2}2^{m-2}+\ldots+a_{0}2^{0}\right)-
\left(2^{m-1}+\ldots+2^{j}\right)\right)\right]
\end{gather*}
and $s_{k}\left(2^{m}+a_{m-2}2^{m-2}+\ldots+a_{0}2^{0}\right)=0$ for $k\neq m,m-1$.
\end{enumerate}
\end{thm}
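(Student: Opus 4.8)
The plan is to prove a single \emph{first-block recursion} for $s_{k}(m)$ and then obtain items (i)--(vii) as specializations, using Lemma \ref{lem:DecompositionNumbers} for every vanishing clause. Throughout I abbreviate, for a block with top index $k$ and bottom index $r$ (so $l=k-r$), its contribution
\[
C_{r}=2^{k}+2^{k-1}+\ldots+2^{r}=2^{k+1}-2^{r},\qquad r\in\{0,1,\ldots,k\}.
\]
These are exactly the admissible contributions of a \emph{first} block $(m_{1},l_{1})=(k,k-r)$ of an element of $S_{k}(m)$.

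First I would establish the recursion itself. Fix $((m_{1},l_{1}),\ldots,(m_{t},l_{t}))\in S_{k}(m)$; necessarily $m_{1}=k$, and the first block contributes $C_{r}$ with $r=k-l_{1}$. If $t=1$ then $m=C_{r}$, while if $t\ge 2$ the tail $((m_{2},l_{2}),\ldots,(m_{t},l_{t}))$ satisfies $k>m_{2}>\ldots>m_{t}\ge 0$ and hence ranges over all of $\bigcup_{k'=0}^{k-1}S_{k'}(m-C_{r})$. Since distinct $r$ give distinct $C_{r}$, partitioning $S_{k}(m)$ by its first block yields
\[
s_{k}(m)=\bigl|\{\,r:0\le r\le k,\ m=C_{r}\,\}\bigr|+\sum_{\substack{0\le r\le k\\ C_{r}<m}}\ \sum_{k'=0}^{k-1}s_{k'}(m-C_{r}),
\]
where the first term is $0$ or $1$ and each remainder $m-C_{r}>0$ is a genuine argument of $s_{k'}$. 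This is a direct bijection and needs no induction.

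Items (i)--(iii) I would check by direct enumeration of first blocks: (i) is the adopted convention, and for $m=1,2$ only the blocks $(0,0)$ resp.\ $(1,0)$ fit, giving $s_{0}(1)=s_{1}(2)=1$ and $s_{0}(2)=0$, all higher $s_{k}$ vanishing by Lemma \ref{lem:DecompositionNumbers}. For (iv)--(vii) the vanishing of $s_{k}(\cdot)$ for $k\ne m,m-1$ is immediate from Lemma \ref{lem:DecompositionNumbers} (each argument has highest bit at position $m$), so only the $s_{m}$- and $s_{m-1}$-formulas remain. For an $s_{m}$-formula the first block has top $m$, so $C_{r}=2^{m+1}-2^{r}$, and I would determine for the given binary shape of the argument $N$ which $r$ give $N=C_{r}$ (the standalone summand, worth $1$), which give $C_{r}<N$ (the surviving remainders $N-C_{r}$), and which overshoot; for (iv) with $N=2^{m}$ only $r=m$ gives equality and all $r<m$ overshoot, so $s_{m}(2^{m})=1$, and for the solid run $N=2^{m}+\ldots+2^{l}$ of (v), $r=l$ gives equality (hence the ``$1+$'') while $r\in\{l+1,\ldots,m\}$ gives $N-C_{r}=2^{r-1}+\ldots+2^{l}$, so the relabelling $j=r-1$ reproduces the stated double sum. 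For every $s_{m-1}$-formula the first block has top $m-1$, so $C_{r}=2^{m}-2^{r}=2^{m-1}+\ldots+2^{r}<2^{m}\le N$; thus equality never occurs (no ``$1$''), all $r\in\{0,\ldots,m-1\}$ survive, and with $j=r$ this is exactly the displayed sum with inner range $\sum_{k'=0}^{m-2}$.

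The main obstacle I anticipate is the bookkeeping in (vi) and (vii), where $N=(2^{m}+\ldots+2^{l})+P$ with $P=a_{l-2}2^{l-2}+\ldots+a_{0}$ and the bit at position $l-1$ equal to $0$, so $0<P<2^{l-1}$ (here $l=m$ for (vii)). The point to get exactly right is the comparison of $N$ with $C_{r}=2^{m+1}-2^{r}$, equivalently of $P+2^{r}$ with $2^{l}$: for $r\ge l$ one has $P+2^{r}\ge 2^{l}$, strictly since $P>0$, so $N>C_{r}$ and no equality ever arises; for $r<l$ the bound $P<2^{l-1}$ forces $P+2^{r}<2^{l}$, so $C_{r}>N$ overshoots. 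Hence the admissible bottoms are $r\in\{l,\ldots,m\}$: the value $r=l$ leaves remainder exactly $P$ (producing $\sum_{k'=0}^{m-1}s_{k'}(P)$, which replaces the missing ``$1$''), while $r\in\{l+1,\ldots,m\}$ leaves $2^{r-1}+\ldots+2^{l}+P$ and, after $j=r-1$, gives the second double sum; for (vii) the range $\{l+1,\ldots,m\}$ is empty and only the $P$-term survives. Verifying these inequalities and matching each remainder to the stated argument is the delicate step; everything else is the uniform application of the first-block recursion.
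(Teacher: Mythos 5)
Your proposal is correct and follows essentially the same route as the paper: both arguments classify the elements of $S_{k}(N)$ by the first block $(m_{1},l_{1})$, identify the tail with an element of the disjoint union $\bigcup_{k'<k}S_{k'}(N-C_{r})$, and invoke Lemma \ref{lem:DecompositionNumbers} for the vanishing clauses. The only difference is organizational --- you state the first-block recursion once and then specialize, whereas the paper repeats the same bijection separately inside each of cases (iv)--(vii) --- and your bookkeeping of which bottoms $r$ give equality, a positive remainder, or an overshoot agrees with the paper's.
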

\begin{proof}{}
(\ref{thm:Recursion0}) follows the definition of $s$.

(\ref{thm:Recursion1}) and (\ref{thm:Recursion2}) follows the equations $S_{0}(1)=\{(0,0)\}$ and
$S_{1}(2)=\{(1,0)\}$ respectively, as well as Lemma \ref{lem:DecompositionNumbers}. 

(\ref{thm:RecursionPower2}) There are three cases to consider when determining $S_{k}(2^{m})$, namely $k=m$, $k=m-1$ and $k \neq m,m-1$. 
\begin{enumerate}[(a)]
\item $S_{m}(2^{m})=\{(m,0)\}$, hence $s_{m}(2^{m})=1$.
\item Consider first an element 
\begin{displaymath}
((m_{1},l_{1}),(m_{2},l_{2}),\ldots,(m_{t},l_{t})) \in S_{m-1}(2^{m})
\end{displaymath}
Therefore $m_{1}=m-1$. Putting $j=m-1-l_{1}$ we get $m-1 \geq j \geq 0$. And since 
\begin{displaymath}
2^{m}>\sum_{i=j}^{m-1}2^{i}=\sum_{p=0}^{l_{1}}2^{(m-1)-p}=\sum_{p=0}^{l_{1}}2^{m_{1}-p}
\end{displaymath}
one must have $t>1$. Therefore (note that $m_{2}\leq m-2$) 
\begin{displaymath}
((m_{2},l_{2}),\ldots,(m_{t},l_{t})) \in
\bigcupdot_{k=0}^{m-2}S_{k}\left(2^{m}-\left(\sum_{p=0}^{l_{1}}2^{m_{1}-p}\right)\right)=
\bigcupdot_{k=0}^{m-2}S_{k}\left(2^{m}-\left(2^{m-1}+\ldots+2^{j}\right)\right)
\end{displaymath}

Conversely, for every $j=0,1,\ldots,m-1$ an element
\begin{displaymath}
((m_{2},l_{2}),\ldots,(m_{t},l_{t})) \in \bigcupdot_{k=0}^{m-2}S_{k}\left(2^{m}-\left(2^{m-1}+\ldots+2^{j}\right)\right)
\end{displaymath}
determines 
\begin{displaymath}
((m-1,m-1-j),(m_{2},l_{2}),\ldots,(m_{t},l_{t})) \in S_{m-1}(2^{m})
\end{displaymath}
\item The equality $s_{k}\left(2^{m}\right)=0$ for $k\neq m,m-1$ follows Lemma \ref{lem:DecompositionNumbers}.
\end{enumerate}
(\ref{thm:RecursionMain1}) There are three cases to consider when determining $S_{k}(2^{m}+\ldots+2^{l})$, namely $k=m$, $k=m-1$ and $k \neq m,m-1$. 
\begin{enumerate}[(a)]
\item Consider first an element 
\begin{displaymath}
((m_{1},l_{1}),(m_{2},l_{2}),\ldots,(m_{t},l_{t})) \in S_{m}(2^{m}+\ldots+2^{l})
\end{displaymath}
If $t=1$, then $(m_{1},l_{1})=(m,m-l)$.

Otherwise $l_{1}<m-l$. To see this note that for $l_{1}\geq m-l$ we have
\begin{displaymath} 
[2^{m}+\ldots+2^{m-l_{1}}]+[2^{m_{2}}+\ldots+2^{m_{2}-l_{2}}]\geq [2^{m}+\ldots+2^{l}]+[2^{m_{2}}]>2^{m}+\ldots+2^{l}
\end{displaymath}
hence
\begin{displaymath}
((m_{1},l_{1}),(m_{2},l_{2}),\ldots,(m_{t},l_{t})) \notin S_{m}(2^{m}+\ldots+2^{l})
\end{displaymath}
Let $j=m-1-l_{1}$ (in particular $m-1 \geq j > l-1$). Now 
\begin{displaymath}
2^{m}+\ldots+2^{l}>\sum_{i=j+1}^{m}2^{i}=\sum_{p=0}^{l_{1}}2^{m-p}=\sum_{p=0}^{l_{1}}2^{m_{1}-p}
\end{displaymath}
and therefore (since $m_{2}\leq m-1$)
\begin{displaymath}
((m_{2},l_{2}),\ldots,(m_{t},l_{t})) \in \bigcupdot_{k=0}^{m-1} S_{k}\left(\left(2^{m}+\ldots+2^{l}\right)-
\sum_{p=0}^{l_{1}}2^{m_{1}-p}\right) =
\bigcupdot_{k=0}^{m-1}S_{k}\left(2^{j}+\ldots+2^{l}\right)
\end{displaymath}

Conversely, for every $j=l,\ldots,m-1$ an element
\begin{displaymath}
((m_{2},l_{2}),\ldots,(m_{t},l_{t})) \in \bigcupdot_{k=0}^{m-1}S_{k}\left(2^{j}+\ldots+2^{l}\right)
\end{displaymath} 
defines
\begin{displaymath}
((m,m-1-j),(m_{2},l_{2}),\ldots,(m_{t},l_{t})) \in S_{m}\left(2^{m}+\ldots+2^{l}\right)
\end{displaymath}
Together with $(m,m-l)$ these are all elements of $S_{m}\left(2^{m}+\ldots+2^{l}\right)$.
\item Consider first an element 
\begin{displaymath}
((m_{1},l_{1}),(m_{2},l_{2}),\ldots,(m_{t},l_{t})) \in S_{m-1}(2^{m}+\ldots+2^{l})
\end{displaymath}
Therefore $m_{1}=m-1$. Putting $j=m-1-l_{1}$ we get $m-1 \geq j \geq 0$. Since 
\begin{displaymath}
2^{m}+\ldots+2^{l}>\sum_{i=j}^{m-1}2^{i}=\sum_{p=0}^{l_{1}}2^{(m-1)-p}=\sum_{p=0}^{l_{1}}2^{m_{1}-p}
\end{displaymath}
it follows that $t>1$. Therefore (note that $m_{2}\leq m-2$) 
\begin{gather*}
((m_{2},l_{2}),\ldots,(m_{t},l_{t})) \in
\bigcupdot_{k=0}^{m-2}S_{k}\left(\left(2^{m}+\ldots+2^{l}\right)-\left(\sum_{p=0}^{l_{1}}2^{m_{1}-p}\right)\right)=\\
=\bigcupdot_{k=0}^{m-2}S_{k}\left(\left(2^{m}+\ldots+2^{l}\right)-\left(2^{m-1}+\ldots+2^{j}\right)\right)
\end{gather*}

Conversely, for every $j=0,1,\ldots,m-1$, an element
\begin{displaymath}
((m_{2},l_{2}),\ldots,(m_{t},l_{t})) \in \bigcupdot_{k=0}^{m-2}S_{k}\left(\left(2^{m}+\ldots+2^{l}\right)-\left(2^{m-1}+\ldots+2^{j}\right)\right)
\end{displaymath}
determines 
\begin{displaymath}
((m-1,m-1-j),(m_{2},l_{2}),\ldots,(m_{t},l_{t})) \in S_{m-1}\left(2^{m}+\ldots+2^{l}\right)
\end{displaymath}
\item The equality $s_{k}\left(2^{m}+\ldots+2^{l}\right)=0$ for $k\neq m,m-1$ follows Lemma \ref{lem:DecompositionNumbers}.
\end{enumerate}
(\ref{thm:RecursionMain2}) There are three cases to consider when determining $S_{k}(2^{m}+\ldots+2^{l}+a_{l-2}2^{l-2}+\ldots+a_{0}2^{0})$, namely $k=m$, $k=m-1$ and $k \neq m,m-1$. 
\begin{enumerate}[(a)]
\item Consider first an element 
\begin{displaymath}
((m_{1},l_{1}),(m_{2},l_{2}),\ldots,(m_{t},l_{t})) \in S_{m}(2^{m}+\ldots+2^{l}+a_{l-2}2^{l-2}+\ldots+a_{0}2^{0})
\end{displaymath}
For $l_{1}>m-l$ we have
\begin{displaymath}
2^{m}+\ldots+2^{m-l_{1}} \geq 2^{m}+\ldots+2^{l}+2^{l-1}>2^{m}+\ldots+2^{l}+a_{l-2}2^{l-2}+\ldots+a_{0}2^{2}
\end{displaymath}
and therefore
\begin{displaymath}
((m_{1},l_{1}),(m_{2},l_{2}),\ldots,(m_{t},l_{t})) \notin S_{m}(2^{m}+\ldots+2^{l}+a_{l-2}2^{l-2}+\ldots+a_{0}2^{0})
\end{displaymath}
hence $l_{1}\leq m-l$. And since 
\begin{displaymath}
\sum_{p=0}^{l_{1}}2^{m_{1}-p}\leq \sum_{p=0}^{m-l}2^{m_{1}-p}=\sum_{p=0}^{m-l}2^{m-p}<2^{m}+\ldots+2^{l}+a_{l-2}2^{l-2}+\ldots+a_{0}2^{0}
\end{displaymath}
it follows that $t>1$.

If $l_{1}=m-l$, then
\begin{displaymath} 
\sum_{p=0}^{l_{1}}2^{m_{1}-p}=2^{m}+\ldots+2^{l}
\end{displaymath}
and therefore (since $m_{2}\leq m-1$)
\begin{displaymath}
((m_{2},l_{2}),\ldots,(m_{t},l_{t})) \in \bigcupdot_{k=0}^{m-1} S_{k}\left(a_{l-2}2^{l-2}+\ldots+a_{0}2^{0}\right)
\end{displaymath}

If $l_{1}<m-l$, putting $j=m-1-l_{1}$ we get $l\leq j \leq m-1$. In this case
\begin{displaymath}
\sum_{p=0}^{l_{1}}2^{m_{1}-p}=\sum_{p=0}^{m-1-j}2^{m-p}=2^{m}+\ldots+2^{j+1}
\end{displaymath} 
thus (since $m_{2}\leq m-1$)
\begin{displaymath}
((m_{2},l_{2}),\ldots,(m_{t},l_{t})) \in \bigcupdot_{k=0}^{m-1} S_{k}\left(2^{j}+\ldots+2^{l}+a_{l-2}2^{l-2}+\ldots+a_{0}2^{0}\right)
\end{displaymath}

Conversely, an element
\begin{displaymath}
((m_{2},l_{2}),\ldots,(m_{t},l_{t})) \in \bigcupdot_{k=0}^{m-1} S_{k}\left(a_{l-2}2^{l-2}+\ldots+a_{0}2^{0}\right)
\end{displaymath}
determines
\begin{displaymath}
((m,m-l),(m_{2},l_{2}),\ldots,(m_{t},l_{t})) \in S_{m}(2^{m}+\ldots+2^{l}+a_{l-2}2^{l-2}+\ldots+a_{0}2^{0})
\end{displaymath}
and, for $j=l,\ldots,m-1$, an element
\begin{displaymath}
((m_{2},l_{2}),\ldots,(m_{t},l_{t})) \in \bigcupdot_{k=0}^{m-1} S_{k}\left(2^{j}+\ldots+2^{l}+a_{l-2}2^{l-2}+\ldots+a_{0}2^{0}\right)
\end{displaymath}
determines
\begin{displaymath}
((m,m-1-j),(m_{2},l_{2}),\ldots,(m_{t},l_{t})) \in S_{m}(2^{m}+\ldots+2^{l}+a_{l-2}2^{l-2}+\ldots+a_{0}2^{0})
\end{displaymath}
\item Consider first an element
\begin{displaymath}
((m_{1},l_{1}),(m_{2},l_{2}),\ldots,(m_{t},l_{t})) \in S_{m-1}(2^{m}+\ldots+2^{l}+a_{l-2}2^{l-2}+\ldots+a_{0}2^{0})
\end{displaymath}
Since $m_{1}=m-1$, we get $l_{1}\leq m-1$. Therefore
\begin{displaymath}
\sum_{p=0}^{l_{1}}2^{m_{1}-p}\leq \sum_{p=0}^{m-1}2^{m-1-p}<2^{m}<2^{m}+\ldots+2^{l}+a_{l-2}2^{l-2}+\ldots+a_{0}2^{0}
\end{displaymath}
and hence $t>1$. Putting $j=m_{1}-l_{1}$ we get $0\leq j \leq m-1$. Now
\begin{displaymath}
\sum_{p=0}^{l_{1}}2^{m_{1}-p}=\sum_{p=0}^{m-1-j}2^{m-1-p}=2^{m-1}+\ldots+2^{j}
\end{displaymath}
and therefore (since $m_{2}\leq m-2$)
\begin{displaymath}
((m_{2},l_{2}),\ldots,(m_{t},l_{t})) \in \bigcupdot_{k=1}^{m-2} 
S_{k}\left(\left(2^{m}+\ldots+2^{l}+a_{l-2}2^{l-2}+\ldots+a_{0}2^{0}\right)-\left(2^{m-1}+\ldots+2^{j}\right)\right)
\end{displaymath}
Conversely, for every $j=0,1,\ldots, k-1$ an element
\begin{displaymath}
((m_{2},l_{2}),\ldots,(m_{t},l_{t})) \in \bigcupdot_{k=0}^{m-2} 
S_{k}\left(\left(2^{m}+\ldots+2^{l}+a_{l-2}2^{l-2}+\ldots+a_{0}2^{0}\right)-\left(2^{m-1}+\ldots+2^{j}\right)\right)
\end{displaymath}
determines
\begin{displaymath}
((m-1,m-1-j),(m_{2},l_{2}),\ldots,(m_{t},l_{t})) \in S_{m-1}(2^{m}+\ldots+2^{l}+a_{l-2}2^{l-2}+\ldots+a_{0}2^{0})
\end{displaymath}
\item The equality $s_{k}\left(2^{m}+\ldots+2^{l}+a_{l-2}2^{l-2}+\ldots+a_{0}2^{0}\right)=0$ for $k\neq m,m-1$ follows Lemma \ref{lem:DecompositionNumbers}.
\end{enumerate}
(\ref{thm:RecursionMain3}) There are three cases to consider when determining $S_{k}(2^{m}+a_{m-2}2^{m-2}+\ldots+a_{0}2^{0})$, namely $k=m$, $k=m-1$ and $k \neq m,m-1$.
\begin{enumerate}[(a)]
\item Consider first an element 
\begin{displaymath}
((m_{1},l_{1}),(m_{2},l_{2}),\ldots,(m_{t},l_{t})) \in S_{m}(2^{m}+a_{m-2}2^{m-2}+\ldots+a_{0}2^{0})
\end{displaymath}
For $l_{1}>0$ we have
\begin{displaymath}
\sum_{p=0}^{l_{1}}2^{m_{1}-p}\geq 2^{m}+2^{m-1}>2^{m}+a_{m-2}2^{m-2}+\ldots+a_{0}2^{0}
\end{displaymath}
and therefore
\begin{displaymath}
((m_{1},l_{1}),(m_{2},l_{2}),\ldots,(m_{t},l_{t})) \notin S_{m}(2^{m}+a_{m-2}2^{m-2}+\ldots+a_{0}2^{0})
\end{displaymath}
hence $l_{1}=0$ and $t>1$. Therefore (since $m_{2}<m$)
\begin{displaymath}
((m_{2},l_{2}),\ldots,(m_{t},l_{t})) \in \bigcupdot_{k=0}^{m-1}S_{k}(a_{l-2}2^{l-2}+\ldots+a_{0}2^{0})
\end{displaymath}
Conversely, an element
\begin{displaymath}
((m_{2},l_{2}),\ldots,(m_{t},l_{t})) \in \bigcupdot_{k=0}^{m-1}S_{k}(a_{m-2}2^{m-2}+\ldots+a_{0}2^{0})
\end{displaymath}
determines
\begin{displaymath}
((m,0),(m_{2},l_{2}),\ldots,(m_{t},l_{t})) \in S_{m}(2^{m}+a_{m-2}2^{m-2}+\ldots+a_{0}2^{0})
\end{displaymath}
\item Consider first an element
\begin{displaymath}
((m_{1},l_{1}),(m_{2},l_{2}),\ldots,(m_{t},l_{t})) \in S_{m-1}(2^{m}+a_{m-2}2^{m-2}+\ldots+a_{0}2^{0})
\end{displaymath}
Since $m_{1}=m-1$, we get $l_{1}\leq m-1$. Therefore
\begin{displaymath}
\sum_{p=0}^{l_{1}}2^{m_{1}-p}\leq \sum_{p=0}^{m-1}2^{m-1-p}<2^{m}<2^{m}+a_{m-2}2^{m-2}+\ldots+a_{0}2^{0}
\end{displaymath}
and hence $t>1$. Putting $j=m_{1}-l_{1}$ we get $0\leq j \leq m-1$.
Now 
\begin{displaymath}
\sum_{p=0}^{l_{1}}2^{m_{1}-p}=\sum_{p=0}^{m-1-j}2^{m-1-p}=2^{m-1}+\ldots+2^{j}
\end{displaymath}
and therefore (since $m_{2}\leq m-2$)
\begin{displaymath}
((m_{2},l_{2}),\ldots,(m_{t},l_{t})) \in \bigcupdot_{k=1}^{m-2} 
S_{k}\left(\left(2^{m}+a_{m-2}2^{m-2}+\ldots+a_{0}2^{0}\right)-\left(2^{m-1}+\ldots+2^{j}\right)\right)
\end{displaymath}
Conversely, for every $j=0,1,\ldots,k-1$, an element
\begin{displaymath}
((m_{2},l_{2}),\ldots,(m_{t},l_{t})) \in \bigcupdot_{k=0}^{m-2} 
S_{k}\left(\left(2^{m}+a_{m-2}2^{m-2}+\ldots+a_{0}2^{0}\right)-\left(2^{m-1}+\ldots+2^{j}\right)\right)
\end{displaymath}
determines
\begin{displaymath}
((m-1,m-1-j),(m_{2},l_{2}),\ldots,(m_{t},l_{t})) \in S_{m-1}(2^{m}+a_{m-2}2^{m-2}+\ldots+a_{0}2^{0})
\end{displaymath}
\item The equality $s_{k}\left(2^{m}+a_{m-2}2^{m-2}+\ldots+a_{0}2^{0}\right)=0$ for $k\neq m,m-1$ follows Lemma \ref{lem:DecompositionNumbers}.
\end{enumerate}
This finishes the proof.
\end{proof}

\subsection*{Acknowledgements}
The author is grateful to K. Majcher for the proof of Theorem \ref{thm:AllPermutations}, to P. J\'oziak for the improvement of the proof of Theorem \ref{thm:LehmerNorm} (\ref{thm:LehmerNormTriangle}) as well as for carefully reading of this paper, and to J. Gismatullin for inspiring converations.

Last, but not least, the author wants to thank his whife for her strong and loving support as well as for her inspiring \emph{''try to think nonstandard''}. 

All calculations were performed with R 4.0.3 (\cite{RPackage}).

During the work on this paper the author was partially supported by the SGH fund KAE/S21 and by the NCN fund UMO-2018/31/B/HS4/01005.

\begin{table}[p]
\caption{The Lehmer factorial norm on $S_{3}$}
\label{table:LehmerNorm}
\begin{center}
\begin{tabular}{|c|c|c|c|c|}
\hline
$\numbering_{\lex}(\sigma)$ & $\sigma$ & $\lehmer(\sigma)$ & $\mathcal{LF}_{2}(\sigma)$ & $\numbering_{\lex}(\sigma)$ in the factorial number system representation  \\ 
\hline
$0$ & $(1,2,3)$ & $[0,0,0]$ & $0$ & $0=0\cdot 2! + 0\cdot 1! + 0\cdot 0!$ \\
\hline
$1$ & $(1,3,2)$ & $[0,1,0]$ & $1$ & $1=0\cdot 2! + 1\cdot 1! + 0\cdot 0!$ \\
\hline
$2$ & $(2,1,3)$ & $[1,0,0]$ & $2$ & $2=1\cdot 2! + 0\cdot 1! + 0\cdot 0!$\\
\hline
$3$ & $(2,3,1)$ & $[1,1,0]$ & $3$ & $3=1\cdot 2! + 1\cdot 1! + 0\cdot 0!$ \\
\hline
$4$ & $(3,1,2)$ & $[2,0,0]$ & $3$ & $4=2\cdot 2! + 0\cdot 1! + 0\cdot 0!$\\
\hline
$5$ & $(3,2,1)$ & $[2,1,0]$ & $4$ & $5=2\cdot 2! + 1\cdot 1! + 0\cdot 0!$\\
\hline
\end{tabular}
\end{center}
\end{table}

\begin{table}[p]
\caption{The elements of $S_{k}(m)$}
\label{table:DecompositionNumbers}
\begin{center}
\begin{tabular}{|c|c|c|c|c|}
\hline
$m$ & decomposition of $m$ & $\lehmer(\sigma)$ & $((m_{1},l_{1}),\ldots,(m_{t},l_{t}))$ & element of  \\
\hline
$1$ & $1=[2^{0}]$ & $[1,0]$ & $(0,0)$ & $S_{0}(1)$ \\
\hline
$2$ & $2=[2^{1}]$ & $[1,0,0]$ & $(1,0)$ & $S_{1}(2)$ \\
\hline
$3$ & $3=[2^{1}]+[2^{0}]$ & $[1,1,0]$ & $((1,0),(0,0))$ & $S_{1}(3)$\\
\hline
$3$ & $3=[2^{1}+2^{0}]$ & $[2,0,0]$ & $(1,1)$ & $S_{1}(3)$ \\
\hline
$4$ & $4=[2^{1}+2^{0}]+[2^{0}]$ & $[2,1,0]$ & $((1,1),(0,0))$ & $S_{1}(4)$ \\
\hline
$4$ & $4=[2^{2}]$ & $[1,0,0,0]$ & $(2,0)$ & $S_{2}(4)$\\
\hline
$5$ & $5=[2^{2}] + [2^{0}]$ & $[1,0,1,0]$ & $((2,0),(0,0))$ & $S_{2}(5)$ \\
\hline
$6$ & $6=[2^{2}]+[2^{1}]$ & $[1,1,0,0]$ & $((2,0),(1,0))$ & $S_{2}(6)$\\
\hline
$6$ & $6=[2^{2}+2^{1}]$ & $[2,0,0,0]$ & $(2,1)$ & $S_{2}(6)$ \\
\hline
$7$ & $7=[2^{2}]+[2^{1}]+[2^{0}]$ & $[1,1,1,0]$ & $((2,0),(1,0),(0,0))$ & $S_{2}(7)$ \\
\hline
$7$ & $7=[2^{2}]+[2^{1}+2^{0}]$ & $[1,2,0,0]$ & $((2,0),(1,1))$ & $S_{2}(7)$ \\
\hline
$7$ & $7=[2^{2}+2^{1}]+[2^{0}]$ & $[2,0,1,0]$ & $((2,1),(0,0))$ & $S_{2}(7)$ \\
\hline
$7$ & $7=[2^{2}+2^{1}+2^{0}]$ & $[3,0,0,0]$ & $(2,2)$ & $S_{2}(7)$ \\
\hline
$8$ & $8=[2^{2}]+[2^{1}+2^{0}]+[2^{0}]$ & $[1,2,1,0]$ & $((2,0),(1,1),(0,0))$ & $S_{2}(8)$ \\
\hline
$8$ & $8=[2^{2}+2^{1}]+[2^{1}]$ & $[2,1,0,0]$ & $((2,1),(1,0))$ & $S_{2}(8)$ \\
\hline
$8$ & $8=[2^{2}+2^{1}+2^{0}]+[2^{0}]$ & $[3,0,1,0]$ & $((2,2),(0,0))$ & $S_{2}(8)$ \\
\hline
$8$ & $8=[2^{3}]$ & $[1,0,0,0,0]$ & $(3,0)$ & $S_{3}(8)$ \\
\hline
\end{tabular}
\end{center}
\end{table}

\pgfplotstableread{
k distr cumdistr normcumdistr
1 1 1 0
2 1 2 0
3 2 4 1e-04
4 2 6 1e-04
5 1 7 1e-04
6 2 9 2e-04
7 4 13 3e-04
8 4 17 4e-04
9 4 21 4e-04
10 4 25 5e-04
11 3 28 6e-04
12 3 31 6e-04
13 2 33 7e-04
14 4 37 8e-04
15 8 45 9e-04
16 8 53 0.0011
17 8 61 0.0013
18 10 71 0.0015
19 10 81 0.0017
20 8 89 0.0019
21 10 99 0.0021
22 12 111 0.0023
23 11 122 0.0025
24 11 133 0.0028
25 9 142 0.003
26 6 148 0.0031
27 5 153 0.0032
28 6 159 0.0033
29 4 163 0.0034
30 8 171 0.0036
31 16 187 0.0039
32 16 203 0.0042
33 16 219 0.0046
34 20 239 0.005
35 20 259 0.0054
36 18 277 0.0058
37 22 299 0.0062
38 28 327 0.0068
39 30 357 0.0074
40 28 385 0.008
41 24 409 0.0085
42 24 433 0.009
43 24 457 0.0095
44 22 479 0.01
45 30 509 0.0106
46 38 547 0.0114
47 37 584 0.0122
48 37 621 0.013
49 39 660 0.0138
50 36 696 0.0145
51 34 730 0.0152
52 35 765 0.016
53 32 797 0.0166
54 28 825 0.0172
55 25 850 0.0177
56 21 871 0.0182
57 15 886 0.0185
58 11 897 0.0187
59 10 907 0.0189
60 12 919 0.0192
61 8 927 0.0193
62 16 943 0.0197
63 32 975 0.0203
64 32 1007 0.021
65 32 1039 0.0217
66 40 1079 0.0225
67 40 1119 0.0233
68 36 1155 0.0241
69 44 1199 0.025
70 56 1255 0.0262
71 60 1315 0.0274
72 58 1373 0.0286
73 50 1423 0.0297
74 50 1473 0.0307
75 52 1525 0.0318
76 50 1575 0.0328
77 64 1639 0.0342
78 84 1723 0.0359
79 90 1813 0.0378
80 88 1901 0.0396
81 92 1993 0.0416
82 90 2083 0.0434
83 84 2167 0.0452
84 88 2255 0.047
85 88 2343 0.0489
86 88 2431 0.0507
87 88 2519 0.0525
88 76 2595 0.0541
89 60 2655 0.0554
90 64 2719 0.0567
91 70 2789 0.0582
92 68 2857 0.0596
93 92 2949 0.0615
94 118 3067 0.064
95 119 3186 0.0664
96 119 3305 0.0689
97 125 3430 0.0715
98 126 3556 0.0742
99 128 3684 0.0768
100 135 3819 0.0796
101 144 3963 0.0826
102 148 4111 0.0857
103 138 4249 0.0886
104 124 4373 0.0912
105 128 4501 0.0939
106 127 4628 0.0965
107 125 4753 0.0991
108 140 4893 0.102
109 146 5039 0.1051
110 138 5177 0.108
111 135 5312 0.1108
112 131 5443 0.1135
113 117 5560 0.116
114 108 5668 0.1182
115 102 5770 0.1203
116 85 5855 0.1221
117 68 5923 0.1235
118 57 5980 0.1247
119 46 6026 0.1257
120 38 6064 0.1265
121 29 6093 0.1271
122 22 6115 0.1275
123 20 6135 0.1279
124 24 6159 0.1284
125 16 6175 0.1288
126 32 6207 0.1294
127 64 6271 0.1308
128 64 6335 0.1321
129 64 6399 0.1334
130 80 6479 0.1351
131 80 6559 0.1368
132 72 6631 0.1383
133 88 6719 0.1401
134 112 6831 0.1425
135 120 6951 0.145
136 116 7067 0.1474
137 100 7167 0.1495
138 100 7267 0.1516
139 104 7371 0.1537
140 100 7471 0.1558
141 128 7599 0.1585
142 168 7767 0.162
143 180 7947 0.1657
144 178 8125 0.1694
145 186 8311 0.1733
146 182 8493 0.1771
147 172 8665 0.1807
148 180 8845 0.1845
149 178 9023 0.1882
150 180 9203 0.1919
151 184 9387 0.1958
152 162 9549 0.1991
153 130 9679 0.2019
154 138 9817 0.2047
155 150 9967 0.2079
156 148 10115 0.2109
157 192 10307 0.2149
158 252 10559 0.2202
159 270 10829 0.2258
160 268 11097 0.2314
161 280 11377 0.2373
162 290 11667 0.2433
163 292 11959 0.2494
164 302 12261 0.2557
165 330 12591 0.2626
166 348 12939 0.2698
167 328 13267 0.2767
168 302 13569 0.283
169 302 13871 0.2893
170 300 14171 0.2955
171 304 14475 0.3019
172 336 14811 0.3089
173 360 15171 0.3164
174 368 15539 0.3241
175 376 15915 0.3319
176 364 16279 0.3395
177 340 16619 0.3466
178 324 16943 0.3533
179 304 17247 0.3597
180 282 17529 0.3656
181 254 17783 0.3709
182 246 18029 0.376
183 242 18271 0.381
184 210 18481 0.3854
185 172 18653 0.389
186 192 18845 0.393
187 214 19059 0.3975
188 210 19269 0.4018
189 280 19549 0.4077
190 362 19911 0.4152
191 373 20284 0.423
192 373 20657 0.4308
193 391 21048 0.4389
194 398 21446 0.4472
195 404 21850 0.4557
196 431 22281 0.4647
197 462 22743 0.4743
198 488 23231 0.4845
199 476 23707 0.4944
200 438 24145 0.5035
201 446 24591 0.5128
202 467 25058 0.5226
203 483 25541 0.5326
204 532 26073 0.5437
205 592 26665 0.5561
206 612 27277 0.5689
207 606 27883 0.5815
208 592 28475 0.5938
209 560 29035 0.6055
210 554 29589 0.6171
211 556 30145 0.6287
212 532 30677 0.6398
213 536 31213 0.6509
214 543 31756 0.6623
215 493 32249 0.6725
216 453 32702 0.682
217 488 33190 0.6922
218 511 33701 0.7028
219 532 34233 0.7139
220 604 34837 0.7265
221 652 35489 0.7401
222 650 36139 0.7537
223 645 36784 0.7671
224 641 37425 0.7805
225 643 38068 0.7939
226 646 38714 0.8074
227 644 39358 0.8208
228 641 39999 0.8342
229 628 40627 0.8473
230 584 41211 0.8594
231 558 41769 0.8711
232 556 42325 0.8827
233 536 42861 0.8938
234 525 43386 0.9048
235 527 43913 0.9158
236 500 44413 0.9262
237 458 44871 0.9358
238 431 45302 0.9448
239 396 45698 0.953
240 348 46046 0.9603
241 307 46353 0.9667
242 266 46619 0.9722
243 219 46838 0.9768
244 171 47009 0.9804
245 132 47141 0.9831
246 106 47247 0.9853
247 87 47334 0.9871
248 75 47409 0.9887
249 58 47467 0.9899
250 44 47511 0.9908
251 40 47551 0.9917
252 48 47599 0.9927
253 32 47631 0.9933
254 64 47695 0.9947
255 128 47823 0.9973
256 128 47951 1
}\mytable

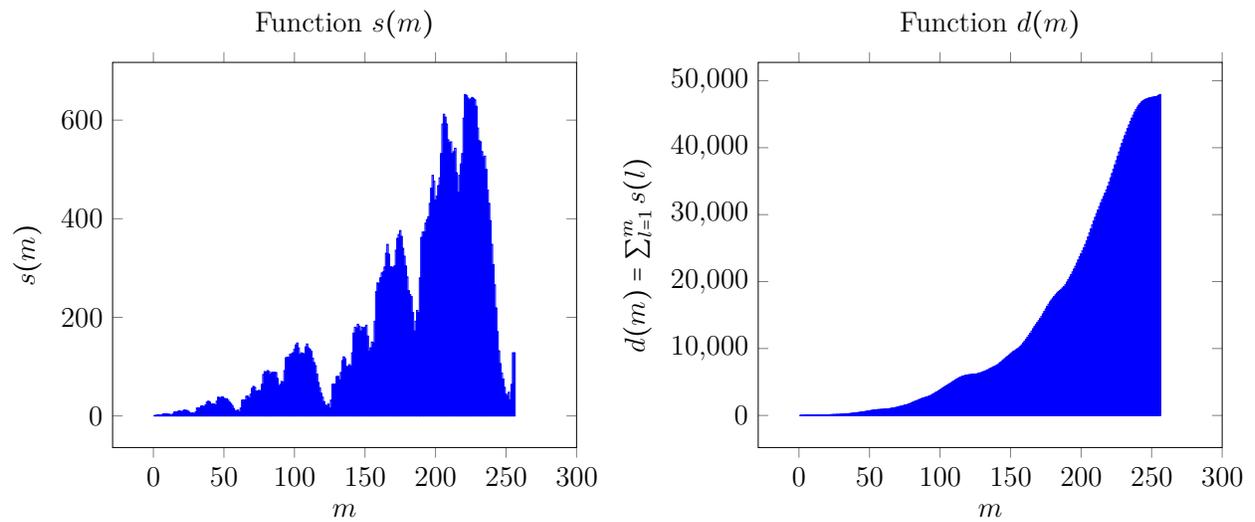
\begin{figure}[p]
\caption{The graphs of functions $s(m)$ and $d(m)$}
\label{fig:graphs}
\begin{tikzpicture}[scale = 0.9]
\begin{axis}[
	ybar, 
	bar width = 0.021cm, 
	xmax = 300, 
	xlabel = {$m$}, 
	ylabel = {$s(m)$}, 
	title = {Function $s(m)$}
]
\addplot table [x=k,y=distr] {\mytable};
\end{axis}
\end{tikzpicture}
\begin{tikzpicture}[scale = 0.9]
\begin{axis}[
	ybar, 
	scaled ticks = false,
	bar width = 0.021cm, 
	xmax = 300, 
	xlabel = {$m$}, 
	ylabel = {$d(m)=\sum_{l=1}^{m}s(l)$}, 
	title = {Function $d(m)$}
]
\addplot table [x=k,y=cumdistr] {\mytable};
\end{axis}
\end{tikzpicture}
\end{figure}

\end{document}